\theoremstyle{plain}
   \newtheorem{theorem}{Theorem}[section]
   \newtheorem{proposition}[theorem]{Proposition}
   \newtheorem{corollary}[theorem]{Corollary}
\theoremstyle{definition}
   \newtheorem{definition}[theorem]{Definition}
   \newtheorem{example}[theorem]{Example}
\theoremstyle{remark}
   \newtheorem{remark}[theorem]{Remark}
\newcommand{\calA}{\mathcal{A}}
\newcommand{\calB}{\mathcal{B}}
\newcommand{\calD}{\mathcal{D}}
\newcommand{\calE}{\mathcal{E}}
\newcommand{\calF}{\mathcal{F}}
\newcommand{\calLpq}{\mathcal{L}^{p,q}}
\newcommand{\calM}{\mathcal{M}}
\newcommand{\calN}{\mathcal{N}}
\newcommand{\calS}{\mathcal{S}}
\newcommand{\calT}{\mathcal{T}}
\newcommand{\calU}{\mathcal{U}}
\newcommand{\bbN}{\mathbb{N}}
\newcommand{\bbR}{\mathbb{R}}
\newcommand{\exR}{\overline{\bbR}}
\newcommand{\seql}{{l\in\bbN}}
\newcommand{\seqn}{{n\in\bbN}}
\newcommand{\seqk}{{k\in\bbN}}
\newcommand{\ninfty}{{n\to\infty}}
\newcommand{\ep}{\varepsilon}
\newcommand{\eset}{\emptyset}
\newcommand{\td}{\widetilde{d}}
\newcommand{\tG}{\widetilde{G}}
\newcommand{\tS}{\widetilde{S}}
\newcommand{\tU}{\widetilde{U}}
\newcommand{\tV}{\widetilde{V}}
\newcommand{\tcalT}{\widetilde{\calT}}
\newcommand{\tcalU}{\widetilde{\calU}}
\newcommand{\mconv}{\stackrel{\!\mu}{\longrightarrow}}
\newcommand{\us}[1]{{\upshape #1}}
\newcommand{\pqn}[1]{\|#1\|_{p,q}}
\newcommand{\dsn}[1]{\|#1\|_{0}}
\newcommand{\norm}[1]{\|#1\|}
\newcommand{\nSu}[1]{(#1)_{1}}
\newcommand{\pqnSu}[1]{(#1)_{p,q}}
\newcommand{\mupq}{\mu^{q/p}}
\newcommand{\Ch}{\textup{Ch}}
\newcommand{\Su}{\textup{Su}}
\newcommand{\fSu}{\frak{S\hspace*{-0.2mm}u}}
\newcommand{\qSu}{\mbox{\textit{S\hspace*{-0.1mm}u}}}
\newcommand{\LS}{\left(\frac{p}{q}\right)^\frac{1}{q}}
\newcommand{\ls}{(p/q)^{1/q}}
\newcommand{\rLS}{\left(\frac{q}{p}\right)^\frac{1}{q}}
\begin{document}
\allowdisplaybreaks
\begin{frontmatter}
%
%
\title{Topological and topological linear properties of the
Sugeno-Lorentz spaces\tnoteref{t1}}
\tnotetext[t1]{This work was supported by JSPS KAKENHI Grant Number 20K03695.}
%
%
\author{Jun Kawabe\corref{cor1}}
\ead{jkawabe@shinshu-u.ac.jp}
\address{Faculty of Engineering, Shinshu University, 4-17-1 Wakasato, Nagano 380-8553, Japan}
\cortext[cor1]{Corresponding author}
%
%
\begin{abstract}
The properties of spaces of Sugeno integrable functions are quite different from those of
ordinary spaces of Lebesgue integrable functions.
In our previous research the completeness and separability of the Sugeno-Lorentz spaces
were discussed in terms of the characteristic of nonadditive measures
without a detailed study of their topologies.
The purpose of the paper is to further advance our study of the Sugeno-Lorentz spaces
by investigating some fundamental topological and topological linear properties
of the Sugeno-Lorentz spaces.
\end{abstract}
%
%
\begin{keyword}
Nonadditive measure;
Sugeno integral;
Sugeno-Lorentz space;
Autocontinuity;
Open sphere condition
\MSC[2020] Primary 28E10\sep Secondary 46E30
\end{keyword}
\end{frontmatter}


%
%
\section{Introduction}\label{intro}
The Sugeno integral was introduced by Sugeno~\cite{Sugeno} under the name of fuzzy integral
and has many important and interesting applications; see~\cite{Chitescu,G-M-S,Grabisch,W-K}.
\par
Given any nonadditive measure $\mu$ on a measurable space $(X,\calA)$
and any Orlicz function $\varPhi$,
the Orlicz space $L_\varPhi(\mu)$
and $\varPhi$-mean convergence were defined in~\cite{W-M}
by using the Sugeno integral.
In particular, if $\varPhi(t):=t^p$, where $0<p<\infty$, then the space $L_\varPhi(\mu)$
are reduced to the space $L^p(\mu)$ consisting of all $\calA$-measurable real-valued functions on $X$
such that $|f|^p$ are Sugeno integrable with respect to $\mu$.
Then, as a particular case of~\cite[Theorem~2]{W-M} it follows that
$L^p(\mu)=L^1(\mu)$ and that $p$-th order convergence is equivalent
to convergence in $\mu$-measure.
This observation suggests that the properties of spaces of Sugeno integrable functions
are quite different form those of ordinary spaces of Lebesgue integrable functions.  
\par
In~\cite{K2021b}, for any $0<p<\infty$ and $0<q<\infty$,
the Sugeno-Lorentz prenorm $\pqnSu{\,\cdot\,}$
and the Sugeno-Lorentz space $\fSu(\mu)$ were defined as the space of
all $\calA$-measurable functions on $X$ such that $|f|^q$ are Sugeno integrable with respect to
the nonadditive measure $\mupq$.
Our definition of the Sugeno-Lorentz spaces and the Sugeno-Lorentz prenorms
agrees with ordinary Lorentz spaces and Lorentz norms~\cite{Lorentz1950,Lorentz1953}
if $\mu$ is $\sigma$-additive and the Sugeno integral is replaced with the Lebesgue integral;
see~\cite{K2021b}.
As a starting point of research,
the completeness and separability of the Sugeno-Lorentz spaces were discussed
in~\cite{K2021b} in terms of the characteristics of nonadditive measures without a detailed
study of their topologies.
In this paper, we focus on the topological and topological linear properties
of the Sugeno-Lorentz spaces.
\par
This paper is organized as follows. Section~\ref{pre} sets up notation and terminology.
It also contains a discussion of an equivalence relation in the space of measurable functions
and the topology generated by a distance function.
In Section~\ref{S-space}, given a nonadditive measure $\mu$ on a measurable space $(X,\calA)$
the Sugeno-Lorentz space and the Sugeno-Lorentz prenorm $\pqnSu{\,\cdot\,}$
are defined for every $0<p<\infty$ and $0<q<\infty$.
This section also contains a summary of results of~\cite{K2021b} that will be used later.
Topological properties of the Sugeno-Lorentz spaces are discussed in Section~\ref{Top}.
First, the Sugeno-Lorentz topology
is defined as the topology generated by the prenorm $\pqnSu{\,\cdot\,}$
and it is shown that any sphere determined by $\pqnSu{\,\cdot\,}$ is open with respect to this topology
if and only if $\mu$ is autocontinuous from above.
It is also shown that the Sugeno-Lorentz topology
is the relative topology induced from the Dunford-Schwartz topology introduced in~\cite{K2021a}.
This fact implies that the Sugeno-Lorentz topology does not depend on $p$ and $q$,
and hence, it is only one regardless of $p$ and $q$.
In Section~\ref{TLS} some fundamental topological linear properties of the Sugeno-Lorentz spaces
are deduced from the corresponding properties of the space of all measurable functions with
the Dunford-Schwartz topology.
Section~\ref{conclusion} provides a summary of our results.
%
%
\section{Preliminaries}\label{pre}
Throughout the paper, $(X,\calA)$ is a measurable space, that is,
$X$ is a nonempty set and $\calA$ is a $\sigma$-field of subsets of $X$.
Let $\bbR$ denote the set of the real numbers and $\bbN$
the set of the natural numbers.
Let $\exR:=[-\infty,\infty]$ be the set of the extended real numbers with usual total order
and algebraic structure.
Assume that $(\pm\infty)\cdot 0=0\cdot (\pm\infty)=0$
since this proves to be convenient in measure and integration theory.
\par
For any $a,b\in\exR$, let $a\vee b:=\max\{a,b\}$ and $a\wedge b:=\min\{a,b\}$
and for any $f,g\colon X\to\exR$, let $(f\vee g)(x):=f(x)\vee g(x)$ and $(f\wedge g)(x):=f(x)\wedge g(x)$
for every $x\in X$.
Let $\calF_0(X)$ denote the set of all $\calA$-measurable real-valued functions on $X$.
Then $\calF_0(X)$ is a real linear space with usual pointwise addition and scalar multiplication.
For any $f,g\in\calF_0(X)$, the notation $f\leq g$ means that $f(x)\leq g(x)$ for every $x\in X$.
Let $\calF_0^+(X):=\{f\in\calF_0(X)\colon f\geq 0\}$.
A \emph{simple}\/ function is a function taking only a finite number of real numbers.
Let $\calS(X)$ denote the set of all $\calA$-measurable simple functions on $X$.
\par
For a sequence $\{a_n\}_\seqn\subset\exR$ and $a\in\exR$, the notation
$a_n\uparrow a$ means
that $\{a_n\}_\seqn$ is nondecreasing and $a_n\to a$, and $a_n\downarrow a$ means
that $\{a_n\}_\seqn$ is nonincreasing and $a_n\to a$. 
For a sequence $\{A_n\}_\seqn\subset\calA$ and $A\in\calA$, the notation $A_n\uparrow A$ means
that $\{A_n\}_\seqn$ is nondecreasing and $A=\bigcup_{n=1}^\infty A_n$, and $A_n\downarrow A$
means that $\{A_n\}_\seqn$ is nonincreasing and $A=\bigcap_{n=1}^\infty A_n$.
The characteristic function of a set $A$, denoted by $\chi_A$, is the function on $X$
such that $\chi_A(x)=1$ if $x\in A$ and $\chi_A(x)=0$ otherwise.
Given two sets $A$ and $B$, let $A\triangle B:=(A\setminus B)\cup (B\setminus A)$
and $A^c:=X\setminus A$.
Let $2^X$ denote the collection of all subsets of $X$.
%
%
\subsection{Nonadditive measures}\label{measure}
A \emph{nonadditive measure}\/ is a set function $\mu\colon\calA\to [0,\infty]$ such that
$\mu(\emptyset)=0$ and $\mu(A)\leq\mu(B)$ whenever $A,B\in\calA$ and $A\subset B$.
This type of set function is also called a monotone measure~\cite{W-K}, a capacity~\cite{Choquet},
or a fuzzy measure~\cite{R-A,Sugeno} in the literature.
Let $\calM(X)$ denote the set of all nonadditive measures $\mu\colon\calA\to [0,\infty]$.
\par
Let $\mu\in\calM(X)$.
We say that $\mu$ is \emph{order continuous}~\cite{Drewnowski}
if $\mu(A_n)\to 0$ whenever $A_n\downarrow\eset$,
\emph{continuous from above}\/ if $\mu(A_n)\to\mu(A)$ whenever $A_n\downarrow A$,
\emph{continuous from below}\/ if $\mu(A_n)\to\mu(A)$ whenever $A_n\uparrow A$,
and \emph{null-continuous}~\cite{U-M}\/ if $\mu(\bigcup_{n=1}^\infty N_n)=0$
whenever $\{N_n\}_\seqn\subset\calA$ is nondecreasing and $\mu(N_n)=0$ for every $\seqn$.
The order continuity follows from the continuity form above, while the null-continuity
follows from the continuity from below.
\par
Following the terminology used in~\cite{W-K},
$\mu$ is called \emph{weakly null-additive}\/ if $\mu(A\cup B)=0$
whenever $A,B\in\calA$ and $\mu(A)=\mu(B)=0$,
\emph{null-additive}\/ if $\mu(A\cup B)=\mu(A)$ whenever $A,B\in\calA$ and $\mu(B)=0$,
\emph{autocontinuous from above}\/ if $\mu(A\cup B_n)\to\mu(A)$ whenever $A,B_n\in\calA$
and $\mu(B_n)\to 0$, and \emph{autocontinuous from below}\/ if $\mu(A\setminus B_n)\to\mu(A)$
whenever $A,B_n\in\calA$ and $\mu(B_n)\to 0$.
Furthermore, we say that $\mu$ satisfies
the \emph{pseudometric generating property}\/ ((p.g.p.)~for short)~\cite{D-F}
if $\mu(A_n\cup B_n)\to 0$ whenever $A_n,B_n\in\calA$
and $\mu(A_n)\lor\mu(B_n)\to 0$.
Every nonadditive measure satisfying the (p.g.p.) is weakly null-additive.
If $\mu$ is autocontinuous from above or below, then it is null-additive, hence weakly null-additive.
\par
A nonadditive measure $\mu$ is called \emph{subadditive}\/ if 
$\mu(A\cup B)\leq\mu(A)+\mu(B)$ for every disjoint $A,B\in\calA$,
\emph{relaxed subadditive}\/ if there is a constant $K\geq 1$ such that
$\mu(A\cup B)\leq K\left\{\mu(A)+\mu(B)\right\}$ for every disjoint $A,B\in\calA$ (in this case
$\mu$ is called \emph{$K$-relaxed subadditive}).
Every subadditive nonadditive measure is relaxed subadditive.
If $\mu$ is relaxed subadditive, then it satisfies the (p.g.p.).
See ~\cite{Denneberg,Pap,W-K} for further information on nonadditive measures.
\begin{remark}
The relaxed subadditivity is also called the quasi-subadditivity according to the terminology
used in metric space theory.
\end{remark}
\subsection{The Sugeno integral}\label{integral}
The Sugeno integral is one of important integrals that is widely used in nonadditive measure theory
and its applications.
The \emph{Sugeno integral}~\cite{R-A,Sugeno} is defined by
\[
\Su(\mu,f):=\sup_{t\in [0,\infty)}t\land\mu(\{f>t\})
\]
for every $\mu\in\calM(X)$ and $f\in\calF_0^+(X)$.
In the above definition the nonincreasing distribution function $\mu(\{f>t\})$,
where $\{f>t\}:=\{x\in X\colon f(x)>t\}$,
may be replaced with $\mu(\{f\geq t\})$ and the interval of the range in which the variable $t$ moves
may be replaced with $[0,\infty]$ or $(0,\infty)$ without changing the integral value.
\par
The following properties of the Sugeno integral are 
easy to prove and used without explicitly mentioning; see also~\cite{W-K}.
Let $f,g\in\calF_0^+(X)$, $A\in\calA$, $\alpha\geq 0$, and $0<p<\infty$.

\begin{itemize}
\item Monotonicity:\ If $f\leq g$ then $\Su(\mu,f)\leq\Su(\mu,g)$.
\item Generativity:\ $\Su(\mu,\alpha\chi_A)=\alpha\land\mu(A)$.
\item Truncated subhomogeneity:\ $\Su(\mu,\alpha f)\leq\max\{1,\alpha\}\Su(\mu,f)$.
\item Exponentiation:\ $\Su(\mu,f^p)=\Su(\mu^{1/p},f)^p$.
\item Integrability:\ $\Su(\mu,f)\leq t\lor\mu(\{f>t\})$ for every $t\in [0,\infty]$,
so that
$\Su(\mu,f)<\infty$ if and only if
there is $t_0\in [0,\infty)$ such that $\mu(\{f>t_0\})<\infty$.
\end{itemize}
\par
Note that the Sugeno integral is neither additive nor positively homogeneous in general.
The relaxed subadditivity of the Sugeno integral
can be characterized in terms of nonadditive measures; see~\cite[Proposition~2.2]{K2021b}.
\begin{proposition}\label{K-sub}
Let $\mu\in\calM(X)$.
The following conditions are equivalent.
\begin{enumerate}
\item[\us{(i)}] $\mu$ is $K$-relaxed subadditive for some $K\geq 1$.
\item[\us{(ii)}] The Sugeno integral is $K$-relaxed subadditive, that is,
for any $f,g\in\calF_0^+(X)$ it follows that
\[
\Su(\mu,f+g)\leq K\bigl\{\Su(\mu,f)+\Su(\mu,g)\bigr\}.
\]
\end{enumerate}
In particular, the Sugeno integral is subadditive if and only if $\mu$ is subadditive.
\end{proposition}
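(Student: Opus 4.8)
The plan is to prove the two implications separately for a fixed constant $K\geq 1$, since our argument will preserve $K$ in both directions and the ``in particular'' clause is then just the special case $K=1$. The implication (ii) $\Rightarrow$ (i) is the routine direction: given disjoint $A,B\in\calA$ I would test (ii) on the functions $f:=\alpha\chi_A$ and $g:=\alpha\chi_B$ with $\alpha\geq 0$. Disjointness gives $f+g=\alpha\chi_{A\cup B}$, so after applying generativity to all three integrals, (ii) becomes $\alpha\wedge\mu(A\cup B)\leq K\bigl(\alpha\wedge\mu(A)+\alpha\wedge\mu(B)\bigr)$ for every $\alpha$. Bounding each truncated term on the right by $\mu(A)$ and $\mu(B)$ respectively and taking the supremum over $\alpha\geq 0$ on the left yields $\mu(A\cup B)\leq K\bigl(\mu(A)+\mu(B)\bigr)$; the supremum formulation handles the cases where some of the measures are infinite with no extra work.

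For the substantive direction (i) $\Rightarrow$ (ii) the geometric input is the pointwise inclusion $\{f+g>t\}\subseteq\{f>s\}\cup\{g>t-s\}$, valid for every splitting $t=s+(t-s)$ with $0\leq s\leq t$, because $f(x)\leq s$ together with $g(x)\leq t-s$ forces $f(x)+g(x)\leq t$. Before using this I would record that $K$-relaxed subadditivity, although stated only for disjoint sets, upgrades to arbitrary $A,B\in\calA$ with the same constant by writing $A\cup B=A\cup(B\setminus A)$ and invoking monotonicity. Consequently $\mu(\{f+g>t\})\leq K\bigl(\mu(\{f>s\})+\mu(\{g>t-s\})\bigr)$ for every admissible split.

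Write $a:=\Su(\mu,f)$ and $b:=\Su(\mu,g)$ and assume both are finite, the claim being trivial otherwise. The goal is to show $t\wedge\mu(\{f+g>t\})\leq K(a+b)$ for every $t\in[0,\infty)$ and then take the supremum over $t$. For $t\leq K(a+b)$ this is immediate, since $t\wedge\mu(\{f+g>t\})\leq t\leq K(a+b)$. For $t>K(a+b)\geq a+b$ I would choose a split with $a<s<t-b$, which exists precisely because $t>a+b$. The key small lemma is that $s>a=\Su(\mu,f)\geq s\wedge\mu(\{f>s\})$ forces $\mu(\{f>s\})\leq a$: the inequality $s\wedge\mu(\{f>s\})<s$ rules out $\mu(\{f>s\})\geq s$, so the minimum is attained by $\mu(\{f>s\})$, whence $\mu(\{f>s\})\leq a$; symmetrically $\mu(\{g>t-s\})\leq b$. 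Feeding these two bounds into the inclusion estimate gives $\mu(\{f+g>t\})\leq K(a+b)$, and hence $t\wedge\mu(\{f+g>t\})\leq K(a+b)$, completing the case and the proof.

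The step I expect to demand the most care is this truncation argument, where one passes from $s\wedge\mu(\{f>s\})\leq a<s$ to the strict conclusion $\mu(\{f>s\})\leq a$: it is exactly the interaction between the $\wedge$ in the definition of the Sugeno integral and the strict inequality $s>a$ that must be argued precisely rather than bounded crudely, and it is what makes the decomposition $t=s+(t-s)$ do its job. By contrast, the upgrade from disjoint to arbitrary sets and the treatment of infinite integral values are routine, but I would state both explicitly so that the constant $K$ is visibly unchanged throughout.
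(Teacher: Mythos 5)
Your proof is correct, and it follows the standard route one would expect (and that the paper delegates to \cite[Proposition~2.2]{K2021b}): testing (ii) on $\alpha\chi_A$, $\alpha\chi_B$ for the easy direction, and for (i)$\Rightarrow$(ii) combining the level-set inclusion $\{f+g>t\}\subseteq\{f>s\}\cup\{g>t-s\}$ with the observation that $s>\Su(\mu,f)$ forces $\mu(\{f>s\})\leq\Su(\mu,f)$. The truncation lemma you flag is argued correctly, and the constant $K$ is preserved in both directions, so the subadditive case $K=1$ follows as claimed.
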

\begin{remark}
The subadditivity of the Sugeno integral was proved in~\cite[Proposition~5]{G-M-R-S}
for nonadditive measures on the discrete space $(\bbN,2^\bbN)$.
\end{remark}
\subsection{Convergence in measure of measurable functions}\label{mode}

The concept of convergence in measure is not quite intuitive, but it has some advantages in analysis.
Let $\{f_n\}_\seqn\subset\calF_0(X)$ and $f\in\calF_0(X)$.
We say that $\{f_n\}_\seqn$ converges \emph{in $\mu$-measure}\/ to $f$, denoted by $f_n\mconv f$,
if $\mu(\{|f_n-f|>\ep\})\to 0$ for every $\ep>0$.
This mode of convergence requires that the differences between the elements $f_n$
of the sequence and the limit function $f$ should become small in some sense as $n$ increases.
The following definition involves only the elements of the sequence.
We say that $\{f_n\}_\seqn$
is \emph{Cauchy in $\mu$-measure}\/ if for any $\ep>0$ and $\delta>0$
there is $n_0\in\bbN$ such that $\mu(\{|f_m-f_n|>\ep\})<\delta$
whenever $m,n\in\bbN$ and $m,n\geq n_0$.
\par
See a survey paper~\cite{L-M-P-K} for further information on various modes
of convergence of measurable functions in nonadditive measure theory.
%
%
\subsection{Equivalence relation and quotient space}\label{equiv}
The quotient space of $\calF_0(X)$ is constructed by an equivalence relation
determined by a nonadditive measure $\mu$.
The proof of the following statements is routine and left it to the reader.
\begin{itemize}
\item Assume that $\mu$ is weakly null-additive. Given $f,g\in\calF_0(X)$,
define the binary relation $f\sim g$ on $\calF_0(X)$ by $\mu(\{|f-g|>c\})=0$ for every $c>0$
so as to become an equivalence relation on $\calF_0(X)$.
For every $f\in\calF_0(X)$ the equivalence class of $f$
is the set of the form $\{g\in\calF_0(X)\colon f\sim g\}$ and denoted by $[f]$.
Then the quotient space of $\calF_0(X)$ is defined by $F_0(X):=\{[f]\colon f\in\calF_0(X)\}$
and the mapping $\kappa\colon\calF_0(X)\to F_0(X)$ defined by $\kappa(f):=[f]$
for every $f\in\calF_0(X)$ is known as the quotient map.
\item Assume that $\mu$ is weakly null-additive.
Given equivalence classes $[f],[g]\in F_0(X)$ and $\alpha\in\bbR$, define addition and scalar multiplication
on $F_0(X)$ by $[f]+[g]:=[f+g]$ and $\alpha [f]:=[\alpha f]$.
They are well-defined, that is, they are independent of which member of an equivalence class we choose
to define them.
Then $F_0(X)$ is a real linear space.
\end{itemize}
\par
The space $F_0(X)$ is exactly the same as the quotient space $\calF_0(X)/\calN$, where
$\calN:=\{f\in\calF_0(X)\colon\mu(\{|f|>c\})=0\mbox{ for every }c>0\}$.
The binary relation on $\calF_0(X)$ defined above may not
be transitive unless $\mu$ is weakly null-additive; see~\cite[Example~5.1]{K2021a}.
In what follows, let $S(X):=\{[h]\colon h\in\calS(X)\}$.
%
%
\subsection{Prenorms}\label{prenorm}
Let $V$ be a real linear space.
A \emph{prenorm} on $V$ is a nonnegative real-valued
function $\norm{\cdot}$
defined on $V$ such that $\norm{0}=0$ and $\norm{\!-\!x}=\norm{x}$ for every $x\in V$.
Then the pair $(V,\norm{\cdot})$ is called a \emph{prenormed space}.
A prenorm $\norm{\cdot}$ is called \emph{homogeneous}\/ if it follows that
$\norm{\alpha x}=|\alpha|\norm{x}$ for every $x\in V$ and $\alpha\in\bbR$ and
\emph{truncated subhomogeneous}\/ if it follows that
$\norm{\alpha x}\leq\max(1,|\alpha|)\norm{x}$ for every $x\in V$ and $\alpha\in\bbR$.
Following~\cite{D-D}, a prenorm $\norm{\cdot}$ is called relaxed if it
satisfies a \emph{relaxed triangle inequality}, that is, there is a constant $K\geq 1$
such that $\norm{x+y}\leq K\left\{\norm{x}+\norm{y}\right\}$ for every $x,y\in V$
(in this case, we say that $\norm{\cdot}$ satisfies $K$-relaxed triangle inequality).
\par
To associate with similar characteristics of nonadditive measures,
a prenorm $\norm{\cdot}$ is called \emph{weakly null-additive}\/ if $\norm{x+y}=0$
whenever $x,y\in V$ and $\norm{x}=\norm{y}=0$ and
\emph{null-additive}\/ if $\norm{x+y}=\norm{x}$
whenever $x,y\in V$ and $\norm{y}=0$.
\par
Let $(V,\norm{\cdot})$ be a prenormed space.
Let $\{x_n\}_\seqn\subset V$ and $x\in V$.
We say that $\{x_n\}_\seqn$ \emph{converges}\/ to $x$, denoted by $x_n\to x$,
if $\norm{x_n-x}\to 0$.
We also say that $\{x_n\}_\seqn$ is \emph{Cauchy}\/ if for
any $\varepsilon>0$ there is $n_0\in\bbN$
such that $\norm{x_m-x_n}<\varepsilon$ whenever $m,n\in\bbN$ and $m,n\geq n_0$.
Not every converging sequence is Cauchy since prenorms satisfy neither
the triangle inequality nor its relaxed ones in general.
A subset $B$ of $V$ is called \emph{bounded}\/ if $\sup_{x\in B}\norm{x}<\infty$.
If the prenorm is relaxed, then every converging sequence is Cauchy
and every Cauchy sequence is bounded.
\par
A prenormed space $(V,\norm{\cdot})$ is called \emph{complete}\/
if every Cauchy sequence in $V$ converges to an element in $V$.
It is called \emph{quasi-complete}\/ if every bounded Cauchy sequence
in $V$ converges to an element in $V$.
The denseness and the separability can be defined in the same way as in ordinary normed spaces.
We say that $V$ is \emph{separable}\/ if there is a countable subset $D$ of $V$ such that
$D$ is \emph{dense} in $V$, that is,
for any $x\in V$ and $\ep>0$ there is $y\in D$ for which $\norm{x-y}<\ep$.
\par
In the above terms, if we want to emphasize that we are thinking of $\norm{\cdot}$ as a prenorm,
then the phrase ``with respect to $\norm{\cdot}$'' is added to each term.
%
%
\subsection{Topological concepts}\label{SS:topology}
The topological concepts used in this paper are standard and can be found in~\cite{Willard}.
It is the custom of a great many mathematicians to use ``neighborhood of $x$'' to mean
``open neighborhood of $x$.''
In this paper, neighborhoods are not necessarily open, however, unless so described.
Given a sequence $\{x_n\}_\seqn\subset T$ and $x\in T$ in a topological space $(T,\calT)$,
the notation $x_n\to x$ means that $\{x_n\}_\seqn$ converges to $x$
with respect to the topology $\calT$.
\subsection{Distance functions and distance topologies}\label{Pmetric}
A \emph{distance function} (\emph{distance} for short)
on $T$ is a nonnegative real-valued function $d$ defined on $T\times T$ such that
$d(x,x)=0$ and $d(x,y)=d(y,x)$ for every $x,y\in T$;
see for instance~\cite[p.~3]{D-D} and~\cite[p.~16]{Willard}.
It is called a \emph{pseudometric}\/ if it satisfies the triangle inequality,
that is, $d(x,y)\leq d(x,z)+d(z,y)$ for every $x,y,z\in T$.
A pseudometric is a \emph{metric} if it separates points of $T$,
that is, for any pair of points $x,y\in T$, if $d(x,y)=0$ then $x=y$.
\par
Given a distance $d$ on $T$, one can construct a topology on $T$ in the following way.
For each $x\in T$ and $r>0$, let the sphere centered at $x$ with radius $r$ be
the subset of $T$ defined by
\[
S(x,r):=\{y\in T\colon d(x,y)<r\}.
\]
For each $x\in T$, let $\calU(x)$ be the collection of all subsets $U$ of $X$ such that
$S(x,r)\subset U$ for some $r>0$.
Let $\calT$ be the collection of all subsets $G$ of $T$ such that $G\in\calU(x)$ for every $x\in G$.
It is obvious that $\calT$ is the collection of all subsets $G$ of $T$
with the property that for any $x\in G$ there is $r>0$ such that $S(x,r)\subset G$.
%
%
%
\begin{theorem}\label{Top_property}
The collection $\calT$ is a topology on $T$ and has the following properties.
\begin{enumerate}
\item[\us{(1)}] For any $x\in T$, the neighborhood system of $x$
is contained in $\calU(x)$.
\item[\us{(2)}] For any sequence $\{x_n\}_\seqn\subset T$ and $x\in T$, 
if $d(x_n,x)\to 0$ then $x_n\to x$.
\item[\us{(3)}] The following conditions are equivalent.
\begin{enumerate}
\item[\us{(i)}] For any $x\in T$ and $r>0$, the sphere $S(x,r)$ is a neighborhood of $f$.
\item[\us{(ii)}] For any $x\in T$, the neighborhood system of $x$ coincides with $\calU(x)$.
\item[\us{(iii)}] The topology $\calT$ is first countable and for any sequence $\{x_n\}_\seqn\subset T$
and $x\in T$, it follows that $x_n\to x$ if and only if $d(x_n,x)\to 0$.
\end{enumerate}
\end{enumerate}
\end{theorem}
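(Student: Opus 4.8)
The plan is to establish the four assertions in their natural logical order: first verify that $\calT$ satisfies the topology axioms, then deduce (1) and (2) directly from the definitions, and finally close a cycle of implications among the three conditions in (3).

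For the topology axioms I would note that $\eset\in\calT$ holds vacuously and $T\in\calT$ since $S(x,1)\subset T$ for every $x\in T$. Arbitrary unions are immediate: if a point lies in a union of members of $\calT$, it lies in one of them, which already supplies a sphere about that point inside the union. For finite intersections the only point worth recording is that $S(x,r)\subset S(x,r')$ whenever $r\le r'$, so taking the smaller of two radii yields a single sphere contained in the intersection. All of this is routine. Property (1) follows straight from the definition of a neighborhood: if $N$ is a neighborhood of $x$ there is $G\in\calT$ with $x\in G\subset N$, and since $x\in G$ the defining property of $\calT$ gives $r>0$ with $S(x,r)\subset G\subset N$, whence $N\in\calU(x)$. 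Property (2) then combines (1) with the symmetry of $d$: given a neighborhood $N$ of $x$, pick $r>0$ with $S(x,r)\subset N$; once $d(x_n,x)<r$ we have $d(x,x_n)<r$, so $x_n\in S(x,r)\subset N$, which is exactly topological convergence.

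For (3) the strategy is to prove the cycle (i)$\Rightarrow$(ii)$\Rightarrow$(iii)$\Rightarrow$(i). The implication (i)$\Rightarrow$(ii) needs only that a superset of a neighborhood is again a neighborhood, combined with (1); and (ii)$\Rightarrow$(iii) is obtained by checking that $\{S(x,1/n)\}_\seqn$ is a countable neighborhood base of $x$ (each $S(x,1/n)$ lies in $\calU(x)$, hence is a neighborhood, and every $U\in\calU(x)$ swallows some $S(x,1/n)$) and by supplementing (2) with its converse, which follows at once from the fact that each $S(x,r)$ is now a neighborhood of $x$.

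The genuinely substantial step is (iii)$\Rightarrow$(i), which I would argue by contradiction. If some sphere $S(x_0,r_0)$ fails to be a neighborhood of $x_0$, first countability supplies a decreasing countable neighborhood base $\{N_k\}_\seqk$ at $x_0$; since $S(x_0,r_0)$ is not a neighborhood, no $N_k$ can sit inside it, so I may choose $x_k\in N_k$ with $d(x_0,x_k)\ge r_0$. The nesting of the base forces $x_k\to x_0$ in $\calT$, so by the convergence equivalence assumed in (iii) we would obtain $d(x_k,x_0)\to 0$, contradicting $d(x_0,x_k)\ge r_0>0$. I expect this implication to be the main obstacle, precisely because it is the one place where the absence of the triangle inequality bites: the spheres need not form a neighborhood base on their own, so the sequence cannot be read off from the spheres directly and must instead be extracted abstractly from first countability and then fed back through the hypothesised equivalence of the two notions of convergence.
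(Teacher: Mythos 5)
Your proposal is correct and follows essentially the same route as the paper: the cycle (i)$\Rightarrow$(ii)$\Rightarrow$(iii)$\Rightarrow$(i), with the last implication argued by contradiction via a nested countable neighborhood base exactly as in the paper's proof (the paper merely outsources (1), (2) and (i)$\Rightarrow$(ii) to its reference, while you spell out those routine steps).
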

\begin{proof}
	The proof of (1), (2) and implication (i)$\Rightarrow$(ii) of (3)
    is the same as~\cite[Theorem~4.1]{K2021a}.
	\par
	(ii)$\Rightarrow$(iii)\ The first countability of $\calT$ follows from the fact that for each $x\in T$
	the collection $\calB(x):=\{S(x,1/n)\colon\seqn\}$
	is a neighborhood base of $x$.
	The proof of the rest is easy.
	\par
	(iii)$\Rightarrow$(i)\ Let $x\in T$ and $r>0$. Let $\{U_n\}_\seqn$ be a neighborhood base of $x$.
	Without loss of generality we may assume that $\{U_n\}_\seqn$ is nonincreasing.
	Suppose, contrary to our claim, that $S(x,r)$ is not a neighborhood of $x$.
	Then, for any $\seqn$, we have $U_n\not\subset S(x,r)$, for if not, $U_n\subset S(x,r)$,
	which is impossible since $S(x,r)$ is not a neighborhood of $x$.
	Form this, for each $\seqn$, we can take $x_n\in U_n$ such that $x_n\not\in S(x,r)$.
	Then $x_n\to x$, hence $d(x_n,x)\to 0$ by assumption.
	This leads to a contradiction since $d(x_n,x)\geq r$ for every $\seqn$.
\end{proof}

This way of constructing a topology is the same as that of constructing the usual metric topology,
but it should be noted that $\calU(x)$ is not always the neighborhood system of $x$.
The topology constructed in the way described above is called the
\emph{distance topology}\/ generated by a distance $d$.
A topological space $(T,\calT)$ is called \emph{pseudometrizable}\/ if the space $T$
can be given a pseudometric $d$
such that the distance topology generated by $d$ coincides with the given topology $\calT$.
In this case, $\calT$ is called \emph{pseudometrizable}\/ by the pseudometric $d$.
A \emph{metrizable}\/ topological space can be defined in the same way.
\par
Finally, a distance $d$ on a real linear space $V$ is called \emph{translation-invariant}\/ if
$d(x,y)=d(x+z,y+z)$ for every $x,y,z\in V$ and \emph{truncated subhomogeneous}\/ if
$d(\alpha x,\alpha y)\leq\max\{1,|\alpha|\}d(x,y)$ for every $x,y\in V$ and $\alpha\in\bbR$.
%
%
%
\section{The Sugeno-Lorentz spaces}\label{S-space}
Let $\mu\in\calM(X)$.
Let $\fSu(\mu)$ denote the set of all Sugeno integrable functions on $X$, that is,
\[
\fSu(\mu):=\{f\in\calF_0(X)\colon\Su(\mu,|f|)<\infty\}.
\]
In this section we define a counterpart of ordinary Lorentz spaces by using the Sugeno integral.
Let $\mu\in\calM(X)$.
Let $0<p<\infty$ and $0<q<\infty$.
When $\mu$ is $\sigma$-additive, the Lorentz space is defined by
\[
\calLpq(\mu):=\{f\in\calF_0(X)\colon\pqn{f}<\infty\},
\]
where $\pqn{f}$ is the Lorentz quasi-seminorm on $\calLpq(\mu)$ defined by the Lebesgue integral as
\begin{equation}\label{Lorentz}
	\pqn{f}:=p^{1/q}\left(\int_0^\infty\left[t\mu(\{|f|>t\})^{1/p}\right]^q\frac{dt}{t}\right)^{1/q}
\end{equation}
for every $f\in\calF_0(X)$~\cite[Theorem~6.6]{C-R}.
The right side of~\eqref{Lorentz} can be expressed as
\[
\Ch(\mupq,p|f|^q/q)^{1/q}
\]
in terms of the Choquet integral defined by
\[
\Ch(\mu,f):=\int_0^\infty\mu(\{f>t\})dt
\]
for every $f\in\calF_0^+(X)$ and $\mu\in\calM(X)$.
This rewriting of equation follows from the fact that
\[
\Ch(\mu,|f|^q)=\int_0^\infty qt^{q-1}\mu(\{|f|>t\})dt
\]
and the positive homogeneity of the Choquet integral, that is,
\[
\Ch(\mu,\alpha |f|)=\alpha\Ch(\mu,|f|)\quad (\alpha>0).
\]
This observation leads to the definition of the Sugeno-Lorentz prenorm.
\begin{definition}\label{Lprenorm}
	Let $\mu\in\calM(X)$. Let $0<p<\infty$ and $0<q<\infty$.
	Define the function $\pqnSu{\,\cdot\,}\colon\calF_0(X)\to [0,\infty]$ by
\begin{equation}\label{New}
\pqnSu{f}:=\Su(\mupq,p|f|^q/q)^{1/q}
\end{equation}
for every $f\in\calF_0(X)$.
\end{definition}
Since $\Su(\mu,|f|)<\infty$ if and only if $\Su(\mupq,p|f|^q/q)<\infty$ by~\cite[Lemma~9.4]{W-K},
it follows that
\[
\fSu(\mu)=\{f\in\calF_0(X)\colon\pqnSu{f}<\infty\}.
\]
Furthermore, if $\mu$ is finite, then $\Su(\mu,|f|)\leq\mu(X)<\infty$
for every $f\in\calF_0(X)$, hence we have
\[
\fSu(\mu)=\calF_0(X).
\]
The functions $\pqnSu{\,\cdot\,}$ are called the \emph{Sugeno-Lorentz prenorms}\/ on $\fSu(\mu)$
and the spaces $\fSu(\mu)$ equipped with them are called the \emph{Sugeno-Lorentz spaces}.
In particular, if $p=q=1$ then $(\,\cdot\,)_{1,1}$ is called the \emph{Sugeno prenorm}
and written by $(\,\cdot\,)_1$, that is, $(f)_1=(f)_{1,1}$ for every $f\in\calF_0(X)$.
The space $\fSu(\mu)$ equipped with the Sugeno prenorm is called the \emph{Sugeno space}.
In Section~\ref{Top} we will show that the pronorms
$\pqnSu{\,\cdot\,}$ and $(\,\cdot\,)_1$ generate the same topology
on $\fSu(\mu)$ and that those topologies coincide with the relative topology
induced from the topology generated by the Dunford-Schwartz prenorm $\dsn{\cdot}$
on $\calF_0(X)$ introduced in~\cite{K2021a}.
\begin{remark}\label{difference}
	 The Sugeno-Lorentz prenorm was defined in~\cite{K2021b} by
	\[
	\pqnSu{f}=\left(\frac{p}{q}\right)^{1/q}\Su(\mupq,|f|^q)^{1/q},
	\]
	which is not equivalent to formula~\eqref{New} in Definition~\ref{Lprenorm} since the Sugeno
	integral is not positively homogeneous.
	However, formula~\eqref{New} seems to be a  more appropriate definition of the Sugeno-Lorentz
	prenorm since it can reflect the feature that the Sugeno integral does not enjoy the positive homogeneity.
	Therefore, in this paper, we adopt formula~\eqref{New} as the definition of the Sugeno-Lorentz prenorm.
	As a matter of fact,
	it turns out that this change in definition does not have an essential effect on the results
	obtained in~\cite{K2021b} or the results of this paper, that is, with obvious modifications
	they can be shown for the Sugeno-Lorentz prenorm~\eqref{New} in the same way.
\end{remark}

\par

The properties of the Sugeno-Lorentz prenorm are collected in the following proposition,
all of which follow from easy calculation; see also~\cite{K2021a,K-Y,K2021b}.
\begin{proposition}\label{P-SuL}
Let $\mu\in\calM(X)$. Let $0<p<\infty$ and $0<q<\infty$.
\begin{enumerate}
\item[\us{(1)}] For any $f\in\fSu(\mu)$ it follows that
\[
\pqnSu{f}=\Su(\mu^{1/p},\ls|f|).
\]
\item[\us{(2)}] For any $A\in\calA$ and $\alpha\in\bbR$ it follows that
\[
\pqnSu{\alpha\chi_A}=\min\left\{|\alpha|\LS,\,\mu(A)^\frac{1}{p}\right\}.
\]
\item[\us{(3)}] For any $f\in\fSu(\mu)$
it follows that $\pqnSu{f}=0$ if and only if $\mu(\{|f|>c\})=0$ for every $c>0$;
they are equivalent to the condition that
$\mu(\{|f|>0\})=0$ if $\mu$ is null-continuous.
\item[\us{(4)}] For any $f\in\fSu(\mu)$ and $\alpha\in\bbR$
it follows that
\[
\pqnSu{\alpha f}\leq\max\left\{1,|\alpha|\right\}\pqnSu{f}.
\]
Hence the prenorm $\pqnSu{\,\cdot\,}$ is truncated subhomogeneous.
\item[\us{(5)}] For any $f\in\fSu(\mu)$
and $\ep>0$ it follows that
\[
\min\left\{\ep\left(\frac{p}{q}\right)^\frac{p}{q},\,\mu(\{|f|>\ep\})\right\}\leq\pqnSu{f}^p.
\]
\item[\us{(6)}] For any $f,g\in\fSu(\mu)$, if $|f|\leq |g|$ then $\pqnSu{f}\leq\pqnSu{g}$.
\item[\us{(7)}] $\mu$ is weakly null-additive if and only if $\pqnSu{\,\cdot\,}$
is weakly null-additive.
\item[\us{(8)}] $\mu$ is null-additive if and only if $\pqnSu{\,\cdot\,}$ is null-additive.
\item[\us{(9)}] $\mu$ is null-additive if and only if it follows that $\pqnSu{f}=\pqnSu{g}$
whenever $f,g\in\fSu(\mu)$ and $f\sim g$.
\item[\us{(10)}] If $\mu$ is $K$-relaxed subadditive for some $K\geq 1$,
then $\pqnSu{\,\cdot\,}$ satisfies $(2K)^{1/p}$-relaxed triangle inequality.
Furthermore, $\mu$ is subadditive if and only if the Sugeno prenorm $(\,\cdot\,)_1$
satisfies the triangle inequality.
\end{enumerate}
\end{proposition}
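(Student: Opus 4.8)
The plan is to establish part~(1) first, since it is the master identity from which the remaining parts follow with little effort. Starting from the definition $\pqnSu{f}=\Su(\mupq,p|f|^q/q)^{1/q}$, I would write $p|f|^q/q=(\ls|f|)^q$ and apply the exponentiation property $\Su(\nu,h^q)=\Su(\nu^{1/q},h)^q$ with $\nu=\mupq$, using $(\mupq)^{1/q}=\mu^{1/p}$; taking $q$-th roots yields $\pqnSu{f}=\Su(\mu^{1/p},\ls|f|)$. With~(1) in hand, parts~(2), (4) and~(6) are immediate: (2) from generativity $\Su(\nu,\beta\chi_A)=\beta\wedge\nu(A)$ applied with $\beta=\ls|\alpha|$ and $\nu=\mu^{1/p}$; (4) from the truncated subhomogeneity of the Sugeno integral applied to $\ls|\alpha f|=|\alpha|\cdot\ls|f|$; and~(6) from the monotonicity of the Sugeno integral.

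For~(3), I would use that $\Su(\nu,h)=0$ if and only if $\nu(\{h>t\})=0$ for every $t>0$ (a direct consequence of the supremum definition), applied to $\nu=\mu^{1/p}$ and $h=\ls|f|$; the change of threshold $\{\ls|f|>t\}=\{|f|>t/\ls\}$ turns this into $\mu(\{|f|>c\})=0$ for all $c>0$. The null-continuous addendum is then the observation that $\{|f|>0\}=\bigcup_{n}\{|f|>1/n\}$ is an increasing union of null sets. For the Chebyshev-type bound~(5), I would not use~(1) but rather the defining inequality $\Su(\mupq,u)\ge s\wedge\mupq(\{u>s\})$ with $u=p|f|^q/q$ and the threshold $s=(p/q)\ep^q$ chosen so that $\{u>s\}=\{|f|>\ep\}$; raising the resulting estimate to the power $p/q$ and distributing it across the minimum gives the claim.

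The set-theoretic equivalences~(7)--(9) follow a common template. For each forward implication I would translate the hypothesis on $\pqnSu{\,\cdot\,}$ into the level-set statement furnished by~(3) and exploit inclusions such as $\{|f+g|>c\}\subset\{|f|>c/2\}\cup\{|g|>c/2\}$. For each converse I would test the prenorm on characteristic functions, reading off values from~(2) (together with~(6) when comparing $\chi_{A\cup B}$ to $\chi_A+\chi_B$) and, where an actual measure value must be recovered, letting $\alpha\to\infty$ in $\pqnSu{\alpha\chi_A}=\min\{|\alpha|\ls,\mu(A)^{1/p}\}$ so that the truncation disappears. The genuinely delicate point is the forward implication of~(9): from $\mu(\{|f-g|>c\})=0$ for all $c>0$ and the null-additivity of $\mu$ (whence null sets may be absorbed, $\mu(A\cup N)=\mu(A)$), I would first derive the distribution comparison $\mu(\{|f|>t\})\le\mu(\{|g|>s\})$ for every $0<s<t$ via $\{|f|>t\}\subset\{|g|>s\}\cup\{|f-g|>t-s\}$, and then conclude $\Su(\mu^{1/p},\ls|f|)\le\Su(\mu^{1/p},\ls|g|)$ by a short contradiction argument at the level of the supremum; symmetry gives equality. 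Reconciling the strict thresholds with the $\min$ inside the Sugeno supremum is the main obstacle.

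Finally, for~(10) the key is to route everything through the measure $\mu^{1/p}$ rather than $\mupq$, so that the exponent $1/p$ in the target constant appears naturally. Assuming $\mu$ is $K$-relaxed subadditive, I would show that $\mu^{1/p}$ is $(2K)^{1/p}$-relaxed subadditive using the elementary inequality $x^p+y^p\le 2(x+y)^p$ (valid for all $x,y\ge0$ and $p>0$, since $(x+y)^p\ge\max\{x,y\}^p\ge(x^p+y^p)/2$): indeed $\mu^{1/p}(A\cup B)=\mu(A\cup B)^{1/p}\le K^{1/p}(\mu(A)+\mu(B))^{1/p}\le(2K)^{1/p}(\mu(A)^{1/p}+\mu(B)^{1/p})$. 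Proposition~\ref{K-sub} applied to $\mu^{1/p}$ then makes $\Su(\mu^{1/p},\,\cdot\,)$ satisfy the $(2K)^{1/p}$-relaxed triangle inequality, and combining this with $\ls|f+g|\le\ls|f|+\ls|g|$, monotonicity, and identity~(1) yields the relaxed triangle inequality for $\pqnSu{\,\cdot\,}$. For the last assertion, specializing to $p=q=1$ gives $\nSu{f}=\Su(\mu,|f|)$; if $\mu$ is subadditive then Proposition~\ref{K-sub} makes the Sugeno integral subadditive and the triangle inequality for $\nSu{\,\cdot\,}$ follows, while the converse is obtained by applying the triangle inequality to $\alpha\chi_A$ and $\alpha\chi_B$ for disjoint $A,B$ and letting $\alpha\to\infty$.
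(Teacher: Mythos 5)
The paper gives no proof of this proposition at all (it says the properties ``follow from easy calculation'' and points to references), so your argument must stand on its own; it does, with one exception. Establishing the identity in (1) first via the exponentiation property $\Su(\nu,h^q)=\Su(\nu^{1/q},h)^q$ and then reading (2), (4) and (6) off it is the natural route, and your handling of (3), (7)--(9) by level-set inclusions and test functions $\alpha\chi_A$ is correct. Your treatment of (10) is also right and is the one nontrivial step: the elementary bound $x^p+y^p\le 2(x+y)^p$ does show that $\mu^{1/p}$ is $(2K)^{1/p}$-relaxed subadditive, and Proposition~\ref{K-sub} applied to $\mu^{1/p}$ then yields exactly the constant $(2K)^{1/p}$ in the statement. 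The ``delicate point'' you flag in (8)/(9) closes with the estimate $t\wedge x\le (s\wedge x)+(t-s)$: from $\mu(\{u>t\})\le\mu(\{v>s\})$ for all $0<s<t$ one gets $t\wedge\mu(\{u>t\})\le s\wedge\mu(\{v>s\})+(t-s)\le\Su(\mu,v)+(t-s)$, and letting $s\uparrow t$ and then taking the supremum over $t$ gives $\Su(\mu,u)\le\Su(\mu,v)$.

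The one genuine mismatch is in (5). Your choice $s=(p/q)\ep^q$ gives, after raising to the power $p/q$, the inequality $\min\{(p/q)^{p/q}\ep^{p},\,\mu(\{|f|>\ep\})\}\le\pqnSu{f}^p$ --- note the exponent $p$ on $\ep$ --- whereas the statement has $\ep$ to the first power. The two coincide only when $p=1$, and the printed version is actually false for $p\ne 1$: with $p=2$, $q=1$, $f=\tfrac12\chi_A$, $\mu(A)$ large and $\ep=0.3$, one has $\pqnSu{f}^2=\min\{4\cdot\tfrac14,\mu(A)\}=1$ while $\min\{4\ep,\mu(A)\}=1.2$. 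So your computation does not ``give the claim'' as literally stated; it gives the corrected inequality (which is what the statement should say). You should make that explicit rather than assert agreement with the displayed formula.
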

\par
From (4) and (10) of Proposition~\ref{P-SuL} it follows that
$\fSu(\mu)$ is a real linear subspace of $\calF_0(X)$ if $\mu$ is relaxed subadditive.

There is a close relationship between convergence in measure and convergence with respect to
$\pqnSu{\,\cdot\,}$.
The conclusion of the following proposition can be found in~\cite{W-K,W-M}, where $\mu$ is
assumed to satisfy a kind of continuity.
The same proof works for any nonadditive measures; see also~\cite{H-O1,H-O2}.
\begin{proposition}\label{CRS}
Let $\mu\in\calM(X)$. Let $0<p<\infty$ and $0<q<\infty$.
For any $\{f_n\}_\seqn\subset\calF_0(X)$ and $f\in\calF_0(X)$,
it follows that $\pqnSu{f_n-f}\to 0$ if and only if $f_n\mconv f$.
\end{proposition}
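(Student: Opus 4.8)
The plan is to prove the two implications separately, exploiting the inequalities already collected in Proposition~\ref{P-SuL} together with the integrability property of the Sugeno integral. Writing $g_n:=|f_n-f|\in\calF_0^+(X)$, I note at the outset that no continuity assumption on $\mu$ enters either argument, which is exactly why the reasoning is valid for an arbitrary nonadditive measure.

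For the direction ``$\pqnSu{f_n-f}\to 0\Rightarrow f_n\mconv f$'', I would apply the lower bound of Proposition~\ref{P-SuL}(5) to $f_n-f$: for every $\ep>0$,
\[
\min\left\{\ep\left(\frac{p}{q}\right)^{p/q},\,\mu(\{g_n>\ep\})\right\}\leq\pqnSu{f_n-f}^p.
\]
Fix $\ep>0$. Since $\pqnSu{f_n-f}^p\to 0$, for all large $n$ we have $\pqnSu{f_n-f}^p<\ep(p/q)^{p/q}$, so the minimum on the left, being strictly below its first entry, must coincide with its second; hence $\mu(\{g_n>\ep\})\leq\pqnSu{f_n-f}^p\to 0$. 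As $\ep>0$ was arbitrary, $f_n\mconv f$.

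For the reverse direction ``$f_n\mconv f\Rightarrow\pqnSu{f_n-f}\to 0$'', I would first rewrite the prenorm as a single Sugeno integral by Proposition~\ref{P-SuL}(1), so that $\pqnSu{f_n-f}=\Su(\mu^{1/p},\ls g_n)$, and then invoke the integrability estimate $\Su(\nu,h)\leq t\lor\nu(\{h>t\})$ with $\nu=\mu^{1/p}$ and $h=\ls g_n$. Since $\mu^{1/p}(\{\ls g_n>t\})=\mu(\{g_n>t/\ls\})^{1/p}$, this gives, for every $t\in[0,\infty]$,
\[
\pqnSu{f_n-f}=\Su\!\left(\mu^{1/p},\ls g_n\right)\leq t\lor\mu\!\left(\{g_n>t/\ls\}\right)^{1/p}.
\]
Given $\ep>0$, I take $t=\ep$; because $\ep/\ls>0$ and $f_n\mconv f$, we have $\mu(\{g_n>\ep/\ls\})\to 0$, so for all large $n$ the second entry falls below $\ep$ and the bound reduces to $\pqnSu{f_n-f}\leq\ep$. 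Thus $\limsup_\ninfty\pqnSu{f_n-f}\leq\ep$ for every $\ep>0$, whence $\pqnSu{f_n-f}\to 0$.

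Both implications are short, so I do not anticipate a substantive obstacle; the only point that requires care is the exponent bookkeeping, namely correctly tracking the factor $\ls$ and the power $1/p$ when passing between $\mu$, $\mu^{1/p}$, and the prenorm, and selecting the thresholds ($\ep$ versus $\ep/\ls$) so that the two estimates close up. The lower bound of Proposition~\ref{P-SuL}(5) carries one direction and the integrability upper bound the other, and since neither relies on any regularity of $\mu$, the result holds in full generality, as claimed.
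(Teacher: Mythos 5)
Your proof is correct. The paper itself does not write out an argument for Proposition~\ref{CRS} (it defers to the cited references, remarking only that the standard proof works for arbitrary nonadditive measures), and your two estimates --- the lower bound of Proposition~\ref{P-SuL}(5) for the forward implication and the integrability bound $\Su(\mu^{1/p},h)\leq t\lor\mu^{1/p}(\{h>t\})$ for the converse --- are exactly that standard argument, carried out with the exponents tracked correctly and, as you note, without any continuity hypothesis on $\mu$.
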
 
The quotient space
\[
\qSu(\mu):=\{[f]\colon f\in\fSu(\mu)\}
\]
is defined by the equivalence relation introduced in Subsection~\ref{equiv}.
Given an equivalence class $[f]\in\qSu(\mu)$,
define the prenorm on $\qSu(\mu)$ by $\pqnSu{[f]}:=\pqnSu{f}$,
which is well-defined by (9) of Proposition~\ref{P-SuL},
provided that $\mu$ is null-additive.  
This prenorm has the same properties as the prenorm on $\fSu(\mu)$
and separates points of $\qSu(\mu)$, that is, for any $[f]\in\qSu(\mu)$,
if $\pqnSu{[f]}=0$ then $[f]=0$.
If $\mu$ is finite, then  $\qSu(\mu)=F_0(X)$.
\par
For any $a,b\in [0,\infty]$ and $0<r<\infty$ the following inequalities
\[
a\land b^r\leq (a\land b)^r+a\land b,\quad (a\land b)^r\leq (a\land b^r)^r+a\land b^r
\]
holds~\cite{H-O1}. The first inequality yields
\begin{equation}\label{EST1}
\pqnSu{f}\leq\left[\max\left\{1,\LS\right\}\nSu{f}\right]^\frac{1}{p}
+\max\left\{1,\LS\right\}\nSu{f}
\end{equation}
and the second one yields
\begin{equation}\label{EST2}
\nSu{f}^\frac{1}{p}\leq\left[\max\left\{1,\rLS\right\}\pqnSu{f}\right]^\frac{1}{p}
+\max\left\{1,\rLS\right\}\pqnSu{f}
\end{equation}
both of which hold for every $f\in\calF_0(X)$.
From these inequalities we see that for any sequence $\{f_n\}_\seqn\subset\calF_0(X)$,
it converges with respect to $\pqnSu{\,\cdot\,}$ if and only if
it converges with respect to $\nSu{\,\cdot\,}$
and that $\{\xi_n\}_\seqn$ is Cauchy with respect to $\pqnSu{\,\cdot\,}$ if and only if
it is Cauchy with respect to $\nSu{\,\cdot\,}$.
\par
The completeness and separability of the Sugeno-Lorentz spaces are already
investigated in~\cite{K2021b}.
For later use, in the following some results are extracted from~\cite{K2021b}
together with related terms.
Let $\mu\in\calM(X)$. We say that $\mu$ satisfies \emph{property (C)}\/ if for any
sequence $\{E_n\}_\seqn\subset\calA$, it follows that $\mu\left(\bigcup_{n=k}^\infty E_n\right)\to 0$
whenever $\sup_\seql\mu\left(\bigcup_{n=k}^{k+l}E_n\right)\to 0$.
Furthermore, we say that $\mu$ is
\emph{monotone autocontinuous from below}~\cite{Rebille}\/ if
$\mu(A\setminus B_n)\to\mu(A)$ whenever $A,B_n\in\calA$, $\mu(B_n)\to 0$,
and $\{B_n\}_\seqn$ is nonincreasing.
Every nonadditve measure that is continuous from below satisfies property~(C).
Other examples of nonadditive measures satisfying property~(C) can be found in~\cite[Proposition~3.3]{K-Y}.
The monotone autocontinuity from below obviously follows from the autocontinuity from below.
If $\mu$ is monotone autocontinuous from below, then it is null-additive.
\begin{theorem}\label{Su-comp}
Let $\mu\in\calM(X)$. Let $0<p<\infty$ and $0<q<\infty$.
Assume that $\mu$ is monotone autocontinuous from below and satisfies property~(C) and the (p.g.p.).
Then $\fSu(\mu)$ and $\qSu(\mu)$ are quasi-complete with respect to $\pqnSu{\,\cdot\,}$.
\end{theorem}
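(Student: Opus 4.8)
The plan is to translate quasi-completeness into a statement about convergence in $\mu$-measure and then to construct the limit by a Borel--Cantelli type argument adapted to the nonadditive setting. Let $\{f_n\}_\seqn\subset\fSu(\mu)$ be bounded and Cauchy with respect to $\pqnSu{\,\cdot\,}$, and put $M:=\sup_\seqn\pqnSu{f_n}<\infty$. First I would use the lower estimate in (5) of Proposition~\ref{P-SuL}, namely $\min\{\ep(p/q)^{p/q},\,\mu(\{|f_n-f_m|>\ep\})\}\leq\pqnSu{f_n-f_m}^p$, to see that $\{f_n\}_\seqn$ is Cauchy in $\mu$-measure: once $\pqnSu{f_n-f_m}^p$ is driven below $\min\{\delta,\ep(p/q)^{p/q}\}$ the minimum cannot be the first term, so $\mu(\{|f_n-f_m|>\ep\})<\delta$. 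The goal is then to produce $f\in\fSu(\mu)$ with $f_n\mconv f$, for by Proposition~\ref{CRS} this is equivalent to $\pqnSu{f_n-f}\to 0$.

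Next I would construct the candidate limit. Fix a summable sequence $\ep_k\downarrow 0$ and choose a subsequence $\{f_{n_k}\}_\seqk$ so rapidly Cauchy in measure that, writing $E_k:=\{|f_{n_{k+1}}-f_{n_k}|>\ep_k\}$, the measures $\mu(E_k)$ decrease fast enough that $\sup_\seql\mu\bigl(\bigcup_{n=k}^{k+l}E_n\bigr)\to 0$ as $\kinfty$; here the (p.g.p.) is what lets one keep \emph{finite} unions of small sets small. Passing from this to the tail control $\mu\bigl(\bigcup_{n=k}^\infty E_n\bigr)\to 0$ is precisely what property~(C) delivers. Setting $F_k:=\bigcup_{n=k}^\infty E_n$ (nonincreasing, with $\mu(F_k)\to 0$), the functions $\{f_{n_k}\}_\seqk$ are uniformly Cauchy on each $X\setminus F_k$ since $|f_{n_k}-f_{n_j}|\leq\sum_{m\geq k}\ep_m$ there; defining $f$ as their pointwise limit off $\bigcap_k F_k$ and $0$ on it yields an $\calA$-measurable $f$, and the bound $|f_{n_k}-f|\leq\sum_{m\geq k}\ep_m$ on $X\setminus F_k$ gives $\{|f_{n_k}-f|>\ep\}\subset F_k$ for large $k$, hence $f_{n_k}\mconv f$. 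Combining this with the Cauchy-in-measure property of the full sequence and the (p.g.p.), which bounds $\mu$ of the union of two small sets, upgrades the convergence to $f_n\mconv f$.

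It remains to show $f\in\fSu(\mu)$, and this is where boundedness is indispensable, explaining why only quasi-completeness holds. Fix $t_0$ with $(t_0-1)(p/q)^{p/q}>M^p$. The lower estimate in (5) of Proposition~\ref{P-SuL} then forces $\mu(\{|f_{n_k}|>t_0-1\})\leq M^p$ for every $k$. For $k$ large one has $|f_{n_k}-f|\leq 1$ on $X\setminus F_k$, so $\{|f|>t_0\}\setminus F_k\subset\{|f_{n_k}|>t_0-1\}$ and therefore $\mu(\{|f|>t_0\}\setminus F_k)\leq M^p$. Since $\{F_k\}_\seqk$ is nonincreasing with $\mu(F_k)\to 0$, monotone autocontinuity from below gives $\mu(\{|f|>t_0\}\setminus F_k)\to\mu(\{|f|>t_0\})$, whence $\mu(\{|f|>t_0\})\leq M^p<\infty$; by the integrability criterion this means $\Su(\mu,|f|)<\infty$, i.e.\ $f\in\fSu(\mu)$. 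Finally $f_n\mconv f$ together with Proposition~\ref{CRS} yields $\pqnSu{f_n-f}\to 0$, proving quasi-completeness of $\fSu(\mu)$. For $\qSu(\mu)$ the same argument applies to chosen representatives, using that monotone autocontinuity from below makes $\mu$ null-additive, so that $\pqnSu{\,\cdot\,}$ is well defined and separates points on the quotient.

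The main obstacle is the construction in the second paragraph: without subadditivity one cannot control $\mu$ of unions by summing, so the delicate point is to select the subsequence so that the (p.g.p.) yields uniform smallness of the \emph{finite} tail unions $\bigcup_{n=k}^{k+l}E_n$, after which property~(C) converts this into smallness of the \emph{infinite} tails $\mu(F_k)$. Once the nonincreasing sets $F_k$ with $\mu(F_k)\to 0$ are in hand, the pointwise construction of $f$, the passage to the whole sequence via the (p.g.p.), and the finiteness of $\pqnSu{f}$ via monotone autocontinuity from below and boundedness all follow as indicated.
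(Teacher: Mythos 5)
Your argument is correct and uses each hypothesis exactly where it is needed: the (p.g.p.)\ (via the standard recursive choice of thresholds $\delta_{k+1}$ with $\mu(A)\vee\mu(B)<\delta_{k+1}\Rightarrow\mu(A\cup B)<\delta_k$) controls the finite unions $\bigcup_{n=k}^{k+l}E_n$, property~(C) passes this to the infinite tails $F_k$, and monotone autocontinuity from below turns the bounds $\mu(\{|f|>t_0\}\setminus F_k)\le M^p$ into $\mu(\{|f|>t_0\})\le M^p<\infty$, which is where boundedness enters and why only quasi-completeness is claimed. The paper does not reprove this theorem but imports it from~\cite{K2021b}; your route is the standard one for such results and is essentially the intended proof.
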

\begin{corollary}\label{Cor:Su-comp}
Let $\mu\in\calM(X)$. Let $0<p<\infty$ and $0<q<\infty$.
Assume that $\mu$ is relaxed subadditive, monotone autocontinuous from below,
and satisfies property~(C).
Then $\fSu(\mu)$ and $\qSu(\mu)$ are complete with respect to $\pqnSu{\,\cdot\,}$.
Furthermore, the prenorms $\pqnSu{\,\cdot\,}$ satisfy relaxed triangle inequalities.
\end{corollary}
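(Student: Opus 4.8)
The plan is to deduce this corollary from Theorem~\ref{Su-comp} and Proposition~\ref{P-SuL}(10), using the elementary fact recorded in Subsection~\ref{prenorm} that for a relaxed prenorm every Cauchy sequence is bounded. The whole point is that the relaxed-subadditivity hypothesis does two jobs: it supplies the missing hypothesis of Theorem~\ref{Su-comp}, and it forces the prenorm $\pqnSu{\,\cdot\,}$ to be relaxed, which is precisely what lets us upgrade quasi-completeness to completeness.

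First I would note that relaxed subadditivity of $\mu$ implies the (p.g.p.) --- this is exactly the implication stated after the definition of relaxed subadditivity in Subsection~\ref{measure}. Consequently all hypotheses of Theorem~\ref{Su-comp} are met ($\mu$ is monotone autocontinuous from below, satisfies property~(C), and satisfies the (p.g.p.)), so both $\fSu(\mu)$ and $\qSu(\mu)$ are quasi-complete with respect to $\pqnSu{\,\cdot\,}$; that is, every \emph{bounded} Cauchy sequence converges to an element of the space. Next I would settle the ``Furthermore'' assertion directly: since $\mu$ is $K$-relaxed subadditive for some $K\geq 1$, Proposition~\ref{P-SuL}(10) gives that $\pqnSu{\,\cdot\,}$ satisfies the $(2K)^{1/p}$-relaxed triangle inequality, so $\pqnSu{\,\cdot\,}$ is a relaxed prenorm.

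The final step removes the boundedness restriction. Because $\pqnSu{\,\cdot\,}$ is relaxed, the observation in Subsection~\ref{prenorm} guarantees that every Cauchy sequence with respect to $\pqnSu{\,\cdot\,}$ is bounded. Hence any Cauchy sequence in $\fSu(\mu)$ (respectively $\qSu(\mu)$) is automatically a bounded Cauchy sequence, and quasi-completeness then forces it to converge to an element of the space, which is completeness. I do not expect a genuine obstacle here, since everything has already been prepared; the only point requiring care is that it is the relaxed triangle inequality, and not the mere truncated subhomogeneity of Proposition~\ref{P-SuL}(4), that makes Cauchy sequences bounded, so the relaxed-subadditivity assumption is used in an essential way both in invoking Theorem~\ref{Su-comp} and in passing from quasi-completeness to completeness.
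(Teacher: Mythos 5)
Your proposal is correct and is essentially the intended derivation: the paper quotes this corollary from \cite{K2021b} without reproving it, and the natural route is exactly yours --- relaxed subadditivity yields the (p.g.p.) so Theorem~\ref{Su-comp} applies, Proposition~\ref{P-SuL}(10) gives the relaxed triangle inequality, and the remark in Subsection~\ref{prenorm} that Cauchy sequences for a relaxed prenorm are bounded upgrades quasi-completeness to completeness. Your closing observation that truncated subhomogeneity alone would not suffice for boundedness of Cauchy sequences is the right point of care.
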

\begin{remark}
Example~4.6 of~\cite{K2021b} shows that property~(C) cannot be dropped
in Theorem~\ref{Su-comp} and Corollary~\ref{Cor:Su-comp}.
\end{remark} 
The denseness and separability of the Sugeno-Lorentz spaces can be also characterized
in terms of some properties of nonadditive measures.
Recall that $\calS(X)$ is the set of all $\calA$-measurable simple functions on $X$
and $S(X):=\{[f]\colon f\in\calS(X)\}$.
\begin{theorem}\label{wLex}
Let $\mu\in\calM(X)$. Let $0<p<\infty$ and $0<q<\infty$.
Assume that $\mu$ is order continuous.
Then, $\calS(X)$ is a dense subset of $\fSu(\mu)$ with respect to $\pqnSu{\,\cdot\,}$.
If $\mu$ is additionally assumed to be null-additive,
then $S(X)$ is a dense subset of $\qSu(\mu)$.
\end{theorem}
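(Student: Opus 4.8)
The plan is to deduce density from an approximation in $\mu$-measure, using Proposition~\ref{CRS} as the bridge. By that proposition, a sequence converges with respect to $\pqnSu{\,\cdot\,}$ if and only if it converges in $\mu$-measure, so it suffices to produce, for each $f\in\fSu(\mu)$, a sequence $\{h_n\}_\seqn\subset\calS(X)$ with $h_n\mconv f$. Note first that every simple function is bounded and hence Sugeno integrable, so $\calS(X)\subset\fSu(\mu)$ and the assertion is meaningful; note also that $\pqnSu{\,\cdot\,}$ is defined on all of $\calF_0(X)$ with values in $[0,\infty]$, so $\pqnSu{h_n-f}$ makes sense without assuming $\fSu(\mu)$ to be a linear space.

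Given $f\in\fSu(\mu)$, I would invoke the standard construction of simple functions converging pointwise to a measurable real-valued function: writing $f=f^+-f^-$ and applying the usual dyadic truncation to each part, one obtains $\{h_n\}_\seqn\subset\calS(X)$ with $h_n(x)\to f(x)$ for every $x\in X$. This step is purely measure-theoretic, uses only that $f$ is finite-valued, and involves neither $\mu$ nor the Sugeno integral.

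The crucial step, and the only real obstacle, is to convert this pointwise convergence into convergence in $\mu$-measure, which is exactly where order continuity enters. Fix $\ep>0$ and set $E_n:=\{|h_n-f|>\ep\}$ and $F_n:=\bigcup_{m=n}^\infty E_m$, both in $\calA$. Since $h_n\to f$ pointwise, every $x\in X$ belongs to at most finitely many of the sets $E_n$, so $\{F_n\}_\seqn$ is nonincreasing with $\bigcap_{n=1}^\infty F_n=\eset$; that is, $F_n\downarrow\eset$. Order continuity then gives $\mu(F_n)\to 0$, and by monotonicity $\mu(E_n)\leq\mu(F_n)$, so $\mu(E_n)\to 0$. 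As $\ep>0$ was arbitrary, $h_n\mconv f$, and Proposition~\ref{CRS} delivers $\pqnSu{h_n-f}\to 0$. Thus $\calS(X)$ is dense in $\fSu(\mu)$.

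For the quotient space, I would add the hypothesis that $\mu$ is null-additive, which by (9) of Proposition~\ref{P-SuL} makes $\pqnSu{[f]}:=\pqnSu{f}$ well-defined on $\qSu(\mu)$. Density then transfers through the quotient map: given $[f]\in\qSu(\mu)$ and $\ep>0$, the first part provides $h\in\calS(X)$ with $\pqnSu{f-h}<\ep$, and since $[f]-[h]=[f-h]$ and $\pqnSu{[f-h]}=\pqnSu{f-h}$, we obtain $\pqnSu{[f]-[h]}<\ep$ with $[h]\in S(X)$. Hence $S(X)$ is dense in $\qSu(\mu)$. Everything outside the third paragraph is routine; the substance of the proof is the tail-set argument, where order continuity is used precisely to push the measures of the bad sets to zero.
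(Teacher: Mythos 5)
Your proof is correct and follows the same route the paper relies on (the result is quoted from \cite{K2021b}, where the argument is precisely this): dyadic pointwise approximation by simple functions, the tail-set argument $F_n=\bigcup_{m\geq n}\{|h_m-f|>\ep\}\downarrow\eset$ turning pointwise convergence into convergence in $\mu$-measure via order continuity, and Proposition~\ref{CRS} to pass to $\pqnSu{\,\cdot\,}$-convergence, with null-additivity used only to make the quotient prenorm well-defined. No gaps.
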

%
%
%
We say that $\mu$ has a \emph{countable base}\/ if there is a countable subset $\calD$
of $\calA$ such that for any $A\in\calA$ and $\ep>0$ there is $D\in\calD$ for which
$\mu(A\triangle D)<\ep$.
\begin{theorem}
Let $\mu\in\calM(X)$. Let $0<p<\infty$ and $0<q<\infty$.
Assume that $\mu$ is order continuous and satisfies the (p.q.p.).
Assume that $\mu$ has a countable base.
Then there is a countable subset $\calE$ of $\fSu(\mu)$ such that for any $f\in\fSu(\mu)$
and $\ep>0$ there is $h\in\calE$ for which $\pqnSu{f-h}<\ep$.
Hence $\fSu(\mu)$ is separable with respect to $\pqnSu{\,\cdot\,}$.
If $\mu$ is additionally assumed to be null-additive, then $\qSu(\mu)$ is separable
with respect to $\pqnSu{\,\cdot\,}$.
\end{theorem}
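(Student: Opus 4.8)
The plan is to reduce the approximation, stated through the prenorm $\pqnSu{\,\cdot\,}$, to a single quantitative approximation of $f$ in $\mu$-measure, where the countable base and the (p.g.p.) can be exploited directly. First I would fix a countable base $\calD=\{D_j\}_\seqj\subset\calA$ of $\mu$ and set
\[
\calE:=\Bigl\{\textstyle\sum_{i=1}^{n}q_i\chi_{D_{k_i}}\colon n\in\bbN,\ q_i\in\mathbb{Q},\ D_{k_i}\in\calD\Bigr\},
\]
which is a countable subset of $\calS(X)\subset\fSu(\mu)$. The reduction rests on the elementary bound
\[
\pqnSu{g}\le\max\bigl\{\ls\,\delta,\ \mu(\{|g|>\delta\})^{1/p}\bigr\}\qquad(\delta>0),
\]
which I would obtain from $\pqnSu{g}=\Su(\mu^{1/p},\ls|g|)$ (Proposition~\ref{P-SuL}(1)) by writing the supremum defining the Sugeno integral as a supremum over the levels $s$ of $\ls s\land\mu(\{|g|>s\})^{1/p}$ and splitting it at $s=\delta$. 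Consequently, to guarantee $\pqnSu{f-h'}<\ep$ it suffices to fix $\delta$ with $\ls\delta<\ep$ and to produce $h'\in\calE$ with $\mu(\{|f-h'|>\delta\})<\ep^p$.

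The main obstacle is that under the present hypotheses $\pqnSu{\,\cdot\,}$ need not obey any (relaxed) triangle inequality, so two approximations cannot simply be chained; this is exactly where the (p.g.p.) is indispensable. I would first record its two equivalent reformulations, obtained by a routine contradiction-and-induction argument: for every $\gamma>0$ there is $\beta>0$ with $\mu(A)\vee\mu(B)<\beta\Rightarrow\mu(A\cup B)<\gamma$, and, more generally, for every $\gamma>0$ and every $n\in\bbN$ there is $\beta>0$ with $\max_{1\le i\le n}\mu(A_i)<\beta\Rightarrow\mu\bigl(\bigcup_{i=1}^{n}A_i\bigr)<\gamma$. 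Together with the inclusion
\[
\{|f-h'|>\delta\}\subset\{|f-h|>\delta/2\}\cup\{|h-h'|>\delta/2\},
\]
these furnish the substitute for the triangle inequality, at the level of distribution functions, that the whole argument needs.

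With these tools the construction proceeds in two stages. Given $f\in\fSu(\mu)$ and $\ep>0$, I fix $\delta$ with $\ls\delta<\ep$ and let $\delta_0$ be the constant from the two-set form of the (p.g.p.) for the target $\gamma=\ep^p$. By Theorem~\ref{wLex} (order continuity) I choose a simple function $h=\sum_{i=1}^{n}a_i\chi_{A_i}$ with the $A_i$ disjoint and $\pqnSu{f-h}$ so small that Proposition~\ref{P-SuL}(5) forces $\mu(\{|f-h|>\delta/2\})<\delta_0$; this freezes $n$. Next, for the target $\delta_0$ I take the constant $\beta$ from the $n$-set form of the (p.g.p.), choose rationals $q_i$ with $|a_i-q_i|<\delta/2$ and sets $D_i\in\calD$ with $\mu(A_i\triangle D_i)<\beta$, and put $h':=\sum_{i=1}^{n}q_i\chi_{D_i}\in\calE$. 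On the complement of $\bigcup_i(A_i\triangle D_i)$ one has $\chi_{D_i}=\chi_{A_i}$ for every $i$, so disjointness of the $A_i$ gives $|h-h'|<\delta/2$ there; hence $\{|h-h'|>\delta/2\}\subset\bigcup_i(A_i\triangle D_i)$ and $\mu(\{|h-h'|>\delta/2\})<\delta_0$. Combining the two level-set estimates through the displayed inclusion and the two-set (p.g.p.) yields $\mu(\{|f-h'|>\delta\})<\ep^p$, and the bound of the first paragraph then gives $\pqnSu{f-h'}<\ep$. This establishes the approximation property, hence the separability of $\fSu(\mu)$.

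For the quotient space I would invoke that null-additivity makes $\pqnSu{\,\cdot\,}$ well defined on equivalence classes (Proposition~\ref{P-SuL}(9)), so that $\{[h]\colon h\in\calE\}$ is a countable subset of $\qSu(\mu)$ with $\pqnSu{[f]-[h']}=\pqnSu{f-h'}$; the estimate just obtained then yields its density. Beyond the conceptual obstacle of the missing triangle inequality, the only point demanding care is the order of selections in the second stage---the integer $n$ must be frozen by the choice of $h$ before the $n$-dependent constant $\beta$ is fixed---which the stated ordering respects.
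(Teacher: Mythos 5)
Your argument is correct. Note that the paper does not actually prove this theorem in the text --- it is one of the results quoted from \cite{K2021b} for later use --- so there is no in-text proof to compare against; but your route is the natural (and almost certainly the intended) one, and every step checks out. In particular: the bound $\pqnSu{g}\le(\ls\,\delta)\vee\mu(\{|g|>\delta\})^{1/p}$ is just the listed ``Integrability'' estimate $\Su(\nu,\varphi)\le t\vee\nu(\{\varphi>t\})$ applied to $\nu=\mu^{1/p}$, $\varphi=\ls|g|$, $t=\ls\delta$; Proposition~\ref{P-SuL}(5) does convert smallness of $\pqnSu{f-h}$ into smallness of $\mu(\{|f-h|>\delta/2\})$ (one only needs $\pqnSu{f-h}^p$ below both $(\delta/2)(p/q)^{p/q}$ and $\delta_0$); the $\ep$--$\delta$ and $n$-fold reformulations of the (p.g.p.) are standard and correctly supply the missing triangle inequality at the level of distribution functions; and you are right that the order of selections matters --- freezing $n$ via Theorem~\ref{wLex} before choosing the $n$-dependent (p.g.p.) constant --- which your proof handles properly. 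The quotient-space statement via Proposition~\ref{P-SuL}(9) is likewise fine.
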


%
%
%
\section{Topological properties of the Sugeno-Lorentz spaces}\label{Top}
%
%
%
Let $\mu\in\calM(X)$. For any $0<p<\infty$ and $0<q<\infty$,
let $\pqnSu{\,\cdot\,}$ be the Sugeno-Lorentz prenorm on $\fSu(\mu)$.
The Sugeno-Lorentz topology on $\fSu(\mu)$
is defined as the distance topology generated by the distance
\[
d(f,g):=\pqnSu{f-g}
\]
for every $f,g\in\fSu(\mu)$.
Let us recall the way of constructing the topology.
For each $f\in\fSu(\mu)$ and $r>0$, let
\[
S(f,r):=\{g\in\fSu(\mu)\colon\pqnSu{f-g}<r\}.
\]
For each $f\in\fSu(\mu)$, let $\calU(f)$ be the collection of all subsets $U$ of $\fSu(\mu)$
such that $S(f,r)\subset U$ for some $r>0$.
Let $\calT$ be the collection of all subsets $G$ of $\fSu(\mu)$
such that $G\in\calU(f)$ for every $f\in G$.
Then Theorem~\ref{Top_property} takes the following form.
\begin{theorem}\label{Su-Top}
The collection $\calT$ is a topology on $\fSu(\mu)$ and has the following properties.
\begin{enumerate}
\item[\us{(1)}] For any $f\in\fSu(\mu)$, the neighborhood system of $f$
is contained in $\calU(f)$.
\item[\us{(2)}] For any sequence $\{f_n\}_\seqn\subset\fSu(\mu)$ and $f\in\fSu(\mu)$, 
if $\pqnSu{f_n-f}\to 0$ then $f_n\to f$.
\item[\us{(3)}] The following conditions are equivalent.
\begin{enumerate}
\item[\us{(i)}] For any $f\in\fSu(\mu)$ and $r>0$, the sphere $S(f,r)$ is a neighborhood of $f$.
\item[\us{(ii)}] For any $f\in\fSu(\mu)$, the neighborhood system of $f$ coincides with $\calU(f)$.
\item[\us{(iii)}] The topology $\calT$ is first countable and
for any sequence $\{f_n\}_\seqn\subset\fSu(\mu)$ and $f\in\fSu(\mu)$, 
it follows that $f_n\to f$ if and only if $\pqnSu{f_n-f}\to 0$.
\end{enumerate}
\end{enumerate}
\end{theorem}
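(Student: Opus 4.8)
The plan is to observe that Theorem~\ref{Su-Top} is precisely the specialization of the general Theorem~\ref{Top_property} to the set $T=\fSu(\mu)$ endowed with the distance $d(f,g):=\pqnSu{f-g}$. Indeed, the spheres $S(f,r)$, the collections $\calU(f)$, and the family $\calT$ displayed immediately before the theorem are exactly the objects produced by the construction of Subsection~\ref{Pmetric} when it is fed this particular $d$. Consequently, once we know that $d$ is a distance function on $\fSu(\mu)$ in the sense of Subsection~\ref{Pmetric}, conclusions~(1), (2) and the equivalence~(3) are immediate transcriptions of the corresponding assertions of Theorem~\ref{Top_property}, reading $x$ as $f$ and $T$ as $\fSu(\mu)$. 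So the only real work is to check that $d$ qualifies as a distance.

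First I would verify the two defining axioms directly from the formula~\eqref{New} for $\pqnSu{\,\cdot\,}$. Since $|{-h}|=|h|$, we have $\pqnSu{-h}=\pqnSu{h}$ for every $h\in\calF_0(X)$; applying this with $h=f-g$ gives $d(f,g)=\pqnSu{f-g}=\pqnSu{-(g-f)}=\pqnSu{g-f}=d(g,f)$, so $d$ is symmetric. Taking $f=g$ and using $\Su(\mupq,0)=0$ yields $d(f,f)=\pqnSu{0}=0$. Nonnegativity is built into the definition of $\pqnSu{\,\cdot\,}$.

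The one delicate point---and the step I expect to be the main obstacle---is that a distance is required to be \emph{real-valued}, i.e.\ that $\pqnSu{f-g}<\infty$ for all $f,g\in\fSu(\mu)$. This is automatic when $\mu$ is relaxed subadditive, for then $\fSu(\mu)$ is a real linear subspace of $\calF_0(X)$ by the remark following Proposition~\ref{P-SuL}, so that $f-g\in\fSu(\mu)$; but for a general $\mu$ the set $\fSu(\mu)$ need not be closed under subtraction, and $\pqnSu{f-g}$ may take the value $+\infty$. I would resolve this by observing that the construction of $S(f,r)$, $\calU(f)$ and $\calT$ uses only finite radii $r>0$ and is therefore well defined verbatim even if $d$ attains $+\infty$, and that the argument proving Theorem~\ref{Top_property} is insensitive to finiteness of $d$---in particular the contradiction $d(x_n,x)\geq r$ driving the implication (iii)$\Rightarrow$(i) remains valid when $d(x_n,x)=+\infty$. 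Thus Theorem~\ref{Top_property} applies to this $d$ without change, and assertions~(1), (2) and~(3) follow.
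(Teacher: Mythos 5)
Your proof is correct and takes essentially the same route as the paper, which offers no separate argument for Theorem~\ref{Su-Top} beyond the remark that it is Theorem~\ref{Top_property} specialized to $T=\fSu(\mu)$ with $d(f,g):=\pqnSu{f-g}$. Your additional observation that $\pqnSu{f-g}$ may equal $+\infty$ for a general (non relaxed subadditive) $\mu$, and that the construction and the proof of Theorem~\ref{Top_property} are unaffected by this, is a legitimate point the paper passes over in silence, but it does not change the approach.
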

%
%
%
%
%
%
\begin{definition}\label{Def:Su-Top}
The topology $\calT$ on $\fSu(\mu)$ constructed in the way described above is called
the \emph{Sugeno-Lorentz topology generated by $\pqnSu{\,\cdot\,}$}.
The topological space $(\fSu(\mu),\calT)$ or its topology $\calT$
is called \emph{compatible with $\pqnSu{\,\cdot\,}$-convergence}\/ if one of equivalent
conditions (i)--(iii) of (3) in Theorem~\ref{Su-Top} is satisfied.
In terms of Proposition~\ref{CRS} it may be called \emph{compatible with convergence in $\mu$-measure}.
\end{definition}
Every sphere of any metric space is open with respect to the metric topology.
This is not the case for the Sugeno-Lorentz topology $\calT$ on $\fSu(\mu)$
as the following example shows.
\begin{example}\label{ex:open}
Let $X=\bbN$ and $\calA:=2^X$.
Let $\mu\colon\calA\to [0,\infty]$ be the nonadditive measure defined by
\[
\mu(A):=\begin{cases}
0 & \mbox{if }A=\emptyset,\\
|A|\cdot\sum\limits_{i\in A}\dfrac{1}{2^i} & \mbox{otherwise}
\end{cases}
\]
for every $A\in\calA$, where $|A|$ stands for the number of elements of $A$.
Let $A:=\{1\}$ and $B_n:=\{n+1\}$ for every $\seqn$.
For simplicity of notation, let $M_0:=\left(q/p\right)^{1/q}$.
Let $f:=M_0\chi_A$ and $f_n:=M_0\chi_{A\cup B_n}$ for every $\seqn$.
Then
\[
\pqnSu{f_n-f}
=\pqnSu{M_0\chi_{B_n}}=\min\left\{1,\mu(B_n)^\frac{1}{p}\right\}
=\min\left\{1,\frac{1}{2^{n+1}}\right\}\to 0,
\]
which implies $f_n\to f$ by (2) of Theorem~\ref{Su-Top}. In addition, we have
\[
\pqnSu{f_n}=\min\left\{1,\left(1+\frac{1}{2^n}\right)^\frac{1}{p}\right\}=1
\]
and
\[
\pqnSu{f}=\min\left\{1,\left(\frac{1}{2}\right)^\frac{1}{p}\right\}<1,
\]
so that $f_n\not\in S(0,1)$ and $f\in S(0,1)$.
Therefore, the sphere $S(0,1)$ is not open with respect to
the Sugeno-Lorentz topology $\calT$ on $\fSu(\mu)$.
\end{example}
The following theorem gives a necessary and sufficient condition that every sphere be open.
\begin{theorem}\label{OSC}
Let $\mu\in\calM(X)$. Let $0<p<\infty$ and $0<q<\infty$.
Let $\calT$ be the Sugeno-Lorentz topology on $\fSu(\mu)$ generated by $\pqnSu{\,\cdot\,}$.
\begin{enumerate}
\item[\us{(1)}] The following conditions are equivalent.
\begin{enumerate}
\item[\us{(i)}] $\mu$ is autocontinuous from above.
\item[\us{(ii)}] For every $f\in\fSu(\mu)$ and $r>0$ the sphere $S(f,r)$ is open with respect to $\calT$.
\end{enumerate}
Furthermore, if $\mu$ is autocontinuous from above, then
$\calU(f)$ is the neighborhood system of $f$ for every $f\in\fSu(\mu)$
and the topology $\calT$
is first countable.
\item[\us{(2)}] The following conditions are equivalent.
\begin{enumerate}
\item[\us{(i)}] $\mu$ is autocontinuous from below.
\item[\us{(ii)}] For every $f\in\fSu(\mu)$ and $r>0$,
the set $\{g\in\fSu(\mu)\colon\pqnSu{f-g}\leq r\}$ is closed with respect to $\calT$.
\end{enumerate}
\end{enumerate}
\end{theorem}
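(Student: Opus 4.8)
The plan is to reduce both equivalences to one-sided continuity (upper/lower semicontinuity) of the map $v\mapsto\pqnSu{v}$ along sequences converging in $\mu$-measure, and then to convert these into openness/closedness through the contradiction scheme of Theorem~\ref{Top_property}. Throughout I would work with the distribution-function form of the prenorm from Proposition~\ref{P-SuL}(1),
\[
\pqnSu{v}=\Su(\mu^{1/p},\ls|v|)=\sup_{\tau>0}\Bigl\{\ls\,\tau\wedge\mu(\{|v|>\tau\})^{1/p}\Bigr\},
\]
together with Proposition~\ref{CRS}, which identifies $\pqnSu{v_n-v}\to0$ with $v_n\mconv v$. The elementary inclusions $\{|u_n|>\tau\}\subset\{|u|>\tau-\ep\}\cup\{|u_n-u|>\ep\}$ and $\{|u|>\tau\}\setminus\{|u_n-u|>\ep\}\subset\{|u_n|>\tau-\ep\}$ are the bridge between the two sets in autocontinuity from above/below and the distribution functions; note that $\mu^{1/p}$ inherits autocontinuity from above (resp. below) from $\mu$ since $t\mapsto t^{1/p}$ is a homeomorphism of $[0,\infty]$.

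For part~(1), (i)$\Rightarrow$(ii), I would first prove the \emph{upper semicontinuity lemma}: if $\mu$ is autocontinuous from above and $u_n\mconv u$, then $\limsup_n\pqnSu{u_n}\leq\pqnSu{u}$. Keeping the \emph{center} set $\{|u|>\tau-\ep\}$ fixed (independent of $n$) and the null sets $\{|u_n-u|>\ep\}$, the first inclusion and autocontinuity from above give $\limsup_n\mu(\{|u_n|>\tau\})\leq\mu(\{|u|>\tau-\ep\})$ for each fixed $\tau,\ep$. Openness of $S(f,r)$ then follows by contradiction: if $g\in S(f,r)$ but no sphere $S(g,s)$ lies in $S(f,r)$, pick $h_n$ with $\pqnSu{g-h_n}<1/n$ and $\pqnSu{f-h_n}\geq r$; applying the lemma to $u_n:=f-h_n\mconv f-g$ yields $\limsup_n\pqnSu{f-h_n}\leq\pqnSu{f-g}<r$, a contradiction. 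The \textbf{furthermore} is immediate: openness of every sphere makes each sphere a neighborhood of its center, i.e. condition (i) of Theorem~\ref{Top_property}(3), whence $\calU(f)$ is the neighborhood system and $\calT$ is first countable. For (ii)$\Rightarrow$(i) I argue contrapositively: if autocontinuity from above fails there are $A$ (necessarily with $\mu(A)<\infty$) and $B_n$ with $\mu(B_n)\to0$ and $\mu(A\cup B_n)\geq\mu(A)+2\kappa$ along a subsequence. Using Proposition~\ref{P-SuL}(2), choose $|\alpha|$ so large that $|\alpha|\ls>(\mu(A)+2\kappa)^{1/p}$ and set $r:=(\mu(A)+\kappa)^{1/p}$; then $\alpha\chi_{A}\in S(0,r)$ while $\pqnSu{\alpha\chi_{A\cup B_n}}\geq(\mu(A)+2\kappa)^{1/p}>r$, yet $\alpha\chi_{A\cup B_n}\to\alpha\chi_A$ in $\calT$ because $\pqnSu{\alpha\chi_{B_n\setminus A}}\to0$. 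This contradicts openness of $S(0,r)$.

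For part~(2), (i)$\Rightarrow$(ii), the analogue is the \emph{lower semicontinuity lemma}: if $\mu$ is autocontinuous from below and $u_n\mconv u$, then $\liminf_n\pqnSu{u_n}\geq\pqnSu{u}$. Here the second inclusion and autocontinuity from below (again with the fixed center set $\{|u|>\tau\}$) give $\liminf_n\mu(\{|u_n|>\tau-\ep\})\geq\mu(\{|u|>\tau\})$; passing to the prenorm, then letting $\ep\downarrow0$ and taking the supremum over $\tau$, yields the claim. To show $\{g\colon\pqnSu{f-g}\leq r\}$ is closed I show its complement $\{g\colon\pqnSu{f-g}>r\}$ lies in $\calT$ by the same contradiction scheme, using the lemma to force $\liminf_n\pqnSu{f-h_n}\geq\pqnSu{f-g}>r$. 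For (ii)$\Rightarrow$(i) I again argue contrapositively with $\alpha\chi_A$ and $\alpha\chi_{A\setminus B_n}$: if autocontinuity from below fails, choose $\beta<\mu(A)$ with $\mu(A\setminus B_n)<\beta$ (along a subsequence) and $|\alpha|\ls>\beta^{1/p}$, and set $r:=\beta^{1/p}$; then every $\alpha\chi_{A\setminus B_n}$ lies in the closed ball while the limit $\alpha\chi_A$ has prenorm $>r$. Since in any topological space the limit of a convergent sequence lying in a closed set must belong to that set, this contradicts closedness.

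The step I expect to be delicate is the upper semicontinuity lemma, because there $\limsup_n$ must be interchanged with the supremum over the threshold $\tau$, and the rate of convergence $\mu(\{|u_n|>\tau\})\to\mu(\{|u|>\tau-\ep\})$ may depend on $\tau$. I would circumvent this by a localization argument: writing $L:=\pqnSu{u}$, any $\tau$ contributing more than $L+\eta$ must satisfy $\ls\,\tau>L+\eta$, hence by monotonicity of the distribution function the value $\mu(\{|u_n|>\tau_*\})$ at the \emph{single} fixed threshold $\tau_*:=(L+\eta)/\ls$ already exceeds $(L+\eta)^p$; one fixed-set application of autocontinuity from above then contradicts $L=\pqnSu{u}$. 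By contrast, the lower semicontinuity lemma needs no such care, since it rests on a clean pointwise-in-$\tau$ bound that survives $\liminf$ and the supremum directly.
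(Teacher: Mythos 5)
Your proposal is correct and follows essentially the same route as the paper: the forward implications rest on exactly the Fatou and reverse-Fatou convergence-in-measure lemmas for the Sugeno integral (which the paper invokes as Proposition~\ref{Fatou} rather than re-deriving, as you do via your localization argument, which checks out), combined with the same nested-sphere contradiction scheme, and the reverse implications use the same characteristic-function counterexamples built from a failure of autocontinuity. The only notable difference is organizational: your choice of $\beta<\mu(A)$ in part (2) handles the cases $\mu(A)=\infty$ and $0<\mu(A)<\infty$ uniformly, where the paper splits into two cases.
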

To prove Theorem~\ref{OSC} the following Fatou and reverse Fatou type lemmas
of the Sugeno integral are needed; see~\cite[Theorem~9.9]{W-K} for the proof.
\begin{proposition}\label{Fatou}
Let $\mu\in\calM(X)$.
\begin{enumerate}
\item[\us{(1)}] The following conditions are equivalent.
\begin{enumerate}
\item[\us{(i)}] $\mu$ is autocontinuous from below.
\item[\us{(ii)}] The Fatou convergence in measure lemma holds for $\mu$, that is,
for any sequence $\{f_n\}_\seqn\subset\calF_0^+(X)$ and $f\in\calF_0^+(X)$, if $f_n\mconv f$, then
it follows that
\[
\Su(\mu,f)\leq\liminf_\ninfty\Su(\mu,f_n).
\]
\end{enumerate}
\item[\us{(2)}] The following conditions are equivalent.
\begin{enumerate}
\item[\us{(i)}] $\mu$ is autocontinuous from above.
\item[\us{(ii)}] The reverse Fatou convergence in measure lemma holds for $\mu$, that is,
for any sequence $\{f_n\}_\seqn\subset\calF_0^+(X)$ and $f\in\calF_0^+(X)$, if $f_n\mconv f$, then
it follows that
\[
\limsup_\ninfty\Su(\mu,f_n)\leq\Su(\mu,f).
\] 
\end{enumerate}
\end{enumerate}
\end{proposition}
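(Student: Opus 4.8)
The plan is to prove all four implications directly from the defining formula $\Su(\mu,g)=\sup_{t\ge0}t\land\mu(\{g>t\})$, exploiting the elementary set inclusions that convergence in measure forces between the level sets of $f_n$ and those of $f$. In the two forward directions I write $B_n:=\{|f_n-f|>\ep\}$, so that $\mu(B_n)\to0$ for each fixed $\ep>0$ once $f_n\mconv f$; the two autocontinuity hypotheses are precisely what is needed to push such null perturbations through $\mu$.

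For the forward implication (i)$\Rightarrow$(ii) of part (1) I would fix $0<s<\Su(\mu,f)$ and choose $t>s$ with $\mu(\{f>t\})>s$, together with $\ep>0$ so small that $t-\ep>s$. On the complement of $B_n$ one has $f_n\ge f-\ep$, whence $\{f>t\}\setminus B_n\subset\{f_n>t-\ep\}$; monotonicity gives $\mu(\{f_n>t-\ep\})\ge\mu(\{f>t\}\setminus B_n)$, and autocontinuity from below forces the right-hand side to tend to $\mu(\{f>t\})>s$. Hence $(t-\ep)\land\mu(\{f_n>t-\ep\})>s$ for large $n$, so $\liminf_\ninfty\Su(\mu,f_n)\ge s$; letting $s\uparrow\Su(\mu,f)$ finishes it. The forward reverse-Fatou direction (i)$\Rightarrow$(ii) of part (2) runs dually: from $f_n>t$ and $|f_n-f|\le\ep$ one gets $f>t-\ep$, so $\{f_n>t\}\subset\{f>t-\ep\}\cup B_n$, and autocontinuity from above yields $\limsup_\ninfty\mu(\{f_n>t\})\le\mu(\{f>t-\ep\})$.

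The converse implications (ii)$\Rightarrow$(i) I would obtain by feeding simple test sequences into the two integral inequalities. For part (1) take $A,B_n\in\calA$ with $\mu(B_n)\to0$ and set $f:=c\chi_A$, $f_n:=c\chi_{A\setminus B_n}$ for a fixed $c>0$; since $\{|f_n-f|>\ep\}=A\cap B_n$ for $0<\ep<c$, we have $f_n\mconv f$, and generativity turns the Fatou inequality into $c\land\mu(A)\le\liminf_\ninfty\bigl(c\land\mu(A\setminus B_n)\bigr)$. Because $x\mapsto c\land x$ is continuous and nondecreasing it commutes with the $\liminf$, so $c\land\mu(A)\le c\land\liminf_\ninfty\mu(A\setminus B_n)$ for every $c>0$; were $\liminf_\ninfty\mu(A\setminus B_n)$ strictly smaller than $\mu(A)$, choosing $c$ strictly between the two would give $c\le\liminf_\ninfty\mu(A\setminus B_n)$, a contradiction, so $\mu(A\setminus B_n)\to\mu(A)$. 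The same device with $f_n:=c\chi_{A\cup B_n}$ and the reverse-Fatou inequality delivers autocontinuity from above.

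The main obstacle is the supremum over $t$ in the forward reverse-Fatou direction: unlike the Fatou case, one cannot simply exhibit a single good level $t$, because the level $t_n$ witnessing $\Su(\mu,f_n)$ may drift with $n$. I would circumvent this by fixing $s<\limsup_\ninfty\Su(\mu,f_n)$, passing to a subsequence along which $\Su(\mu,f_n)>s$, and noting that the witnessing $t_n>s$ force $\mu(\{f_n>s\})>s$ at the common level $s$. The inclusion $\{f_n>s\}\subset\{f>s-\ep\}\cup B_n$ then yields $s\le\mu(\{f>s-\ep\})$, hence $\Su(\mu,f)\ge(s-\ep)\land\mu(\{f>s-\ep\})=s-\ep$; letting $\ep\to0$ and then $s\uparrow\limsup_\ninfty\Su(\mu,f_n)$ closes the argument.
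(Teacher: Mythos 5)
Your proof is correct and complete; note that the paper itself gives no proof of Proposition~\ref{Fatou} but simply cites \cite[Theorem~9.9]{W-K}, and your argument --- the level-set inclusions $\{f>t\}\setminus B_n\subset\{f_n>t-\ep\}$ and $\{f_n>s\}\subset\{f>s-\ep\}\cup B_n$ for the forward directions, and characteristic-function test sequences $c\chi_{A\setminus B_n}$, $c\chi_{A\cup B_n}$ for the converses --- is essentially the standard one found in that reference. Your handling of the drifting witness level via the fixed common level $s$ in the reverse-Fatou direction is the right fix for the only genuinely delicate point.
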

\noindent
\textbf{Proof of Theorem~\ref{OSC}}.
For simplicity of notation, let $L_0:=\left(p/q\right)^{1/q}$ and $M_0:=(q/p)^{1/q}$.
\par

(1)\ (i)$\Rightarrow$(ii)\ Suppose, contrary to our claim, that
$S(f_0,r_0)\not\in\calT$ for some $f_0\in\fSu(\mu)$ and $r_0>0$.
Then $S(f_0,r_0)\not\in\calU(g_0)$ for some $g_0\in S(f_0,r_0)$.
Hence, for each $\seqn$ there is $g_n\in S(g_0,1/n)$ such that $g_n\not\in S(f_0,r_0)$.
Since $\pqnSu{g_n-g_0}<1/n$ for every $\seqn$, we have $g_n\mconv g_0$
by Proposition~\ref{CRS}, hence $L_0|f_0-g_n|\mconv L_0|f_0-g_0|$.
Since $\mu^{1/p}$ is autocontinuous from above, it follows from Proposition~\ref{Fatou} that
\begin{align*}
\limsup_\ninfty\pqnSu{f_0-g_n}
&=\limsup_\ninfty\Su(\mu^{1/p},L_0|f_0-g_n|)\\
&\leq\Su(\mu^{1/p},L_0|f_0-g_0|)\\[1mm]
&=\pqnSu{f_0-g_0}<r_0
\end{align*}
and this contradicts to the fact that $g_n\not\in S(f_0,r_0)$ for every $\seqn$.
\par
(ii)$\Rightarrow$(i)\ Suppose, contrary to our claim, that there are
$A_0\in\calA$ and a sequence $\{B_n\}_\seqn\subset\calA$
such that $\mu(B_n)\to 0$ and $\mu(A_0\cup B_n)\not\to\mu(A_0)$.
If $\mu(A_0)=\infty$ we would see that $\mu(A_0\cup B_n)=\infty$ for every $\seqn$,
which contradicts to the fact that $\mu(A_0\cup B_n)\not\to\mu(A_0)$.
We thus assume that $\mu(A_0)<\infty$.
Then there are $\ep_0>0$ and a subsequence $\{B_{n_k}\}_\seqk$ of $\{B_n\}_\seqn$ such that
\begin{equation}\label{OSC1}
\mu(A_0\cup B_{n_k})>\mu(A_0)+\varepsilon_0
\end{equation}
for every $\seqk$.
Let $r_0:=(\mu(A_0)+\varepsilon_0)^\frac{1}{p}>0$.
Let $f_0:=M_0r_0\chi_{A_0}$ and $f_k:=M_0r_0\chi_{A_0\cup B_{n_k}}$ for every $\seqk$.
First calculate $\pqnSu{f_0}$ and have
\[
\pqnSu{f_0}=\min\left\{r_0,\mu(A_0)^\frac{1}{p}\right\}
=\mu(A_0)^\frac{1}{p}<r_0.
\]
Hence $f_0\in S(0,r_0)$. Next calculate $\pqnSu{f_k}$ and by~\eqref{OSC1} we have
\begin{align*}
\pqnSu{f_k}
&=\min\left\{r_0,\mu(A_0\cup B_{n_k})^\frac{1}{p}\right\}\\
&\geq\min\left\{r_0,(\mu(A_0)+\ep_0)^\frac{1}{p}\right\}=r_0.
\end{align*}
Hence $f_k\not\in S(0,r_0)$ for every $\seqk$.
Finally calculate $\pqnSu{f_k-f_0}$ and have
\[
\pqnSu{f_k-f_0}=\min\left\{r_0,\mu(B_{n_k}\setminus A_0)^\frac{1}{p}\right\}
\leq\mu(B_{n_k})^\frac{1}{p}.
\]
Therefore $\pqnSu{f_k-f_0}\to 0$ since $\mu(B_{n_k})\to 0$.
\par
Now, let us get a contradiction.
By assumption we have $S(0,r_0)\in\calT$, hence it follows from $f_0\in S(0,r_0)$ that
$S(0,r_0)\in\calU(f_0)$.
Therefore, $S(f_0,s_0)\subset S(0,r_0)$ for some $s_0>0$.
For this $s_0$, since $\pqnSu{f_k-f_0}\to 0$, we can find $k_0\in\bbN$
such that $\pqnSu{f_{k_0}-f_0}<s_0$.
Hence $f_{k_0}\in S(0,r_0)$.
This is impossible since $f_k\not\in S(0,r_0)$ for every $\seqk$.
The last statement follows from Theorem~\ref{Su-Top}.
\par
(2)\ (i)$\Rightarrow$(ii)\ For each $f\in\fSu(\mu)$ and $r>0$, let
\[
D(f,r):=\{g\in\fSu(\mu)\colon\pqnSu{f-g}\leq r\}
\]
and show that $D(f,r)^c\in\calT$.
Suppose, contrary to our claim, that $D(f_0,r_0)^c\not\in\calT$ for some $f_0\in\fSu(\mu)$ and $r_0>0$.
Then, by the same argument as the proof of (1)
one can find a sequence $\{g_n\}_\seqn\subset\fSu(\mu)$
and $g_0\in\fSu(\mu)$ such that $L_0|f_0-g_n|\mconv L_0|f_0-g_0|$
and $g_n\in D(f_0,r_0)$ for every $\seqn$.
Since $\mu^{1/p}$ is autocontinuous from below, it follows from Proposition~\ref{Fatou} that
\begin{align*}
\liminf_\ninfty\pqnSu{f_0-g_n}
&=\liminf_\ninfty\Su(\mu^{1/p},L_0|f_0-g_n|)\\
&\geq\Su(\mu^{1/p},L_0|f_0-g_0|)\\[1mm]
&=\pqnSu{f_0-g_0}>r_0
\end{align*}
and this contradicts to the fact that $g_n\in D(f_0,r_0)$ for every $\seqn$.
\par
(ii)$\Rightarrow$(i)\ Suppose, contrary to our claim, that there are $A_0\in\calA$
and a sequence $\{B_n\}\subset\calA$ such that $\mu(B_n)\to 0$
and $\mu(A_0\setminus B_n)\not\to\mu(A_0)$.
If $\mu(A_0)=0$ then we would see that $\mu(A_0\setminus B_n)=0$ for every $\seqn$,
which contradicts to the fact that $\mu(A_0\setminus B_n)\not\to\mu(A_0)$.
We thus assume that $\mu(A_0)>0$.
Then the proof falls naturally into two cases.
\par
First consider the case where $\mu(A_0)=\infty$. Then there are $r_0>0$ and
a sequence $\{B_{n_k}\}_\seqk$ of $\{B_n\}_\seqn$ such that
\begin{equation}\label{OSC2}
\mu(A_0\setminus B_{n_k})\leq r_0^p
\end{equation}
for every $\seqk$.
Let $f_0:=M_0(r_0+1)\chi_{A_0}$ and $f_k:=M_0(r_0+1)\chi_{A_0\setminus B_{n_k}}$ for every $\seqk$.
Then $\pqnSu{f_0}=r_0+1$, hence $f_0\not\in D(0,r_0)$.
Next, by~\eqref{OSC2} we have
\[
\pqnSu{f_k}=\mu(A_0\setminus B_{n_k})^\frac{1}{p}\leq r_0,
\]
so that $f_k\in D(0,r_0)$ for every $\seqk$.
Finally we have
\[
\pqnSu{f_k-f_0}=\min\left\{r_0+1,\mu(A_0\cap B_{n_k})^\frac{1}{p}\right\}
\leq\mu(B_{n_k})^\frac{1}{p},
\]
so that $\pqnSu{f_k-f_0}\to 0$ since $\mu(B_{n_k})\to 0$.
These observations lead to a contradiction by the same argument as the proof of (1).
\par
Next consider the case where $0<\mu(A_0)<\infty$.
Then there are $\ep_0>0$ and a subsequence $\{B_{n_k}\}_\seqk$ of $\{B_n\}_\seqn$ such that
\begin{equation*}\label{OSC3}
\mu(A_0)-\ep_0>\mu(A_0\setminus B_{n_k})
\end{equation*}
for every $\seqk$.
Let $r_0:=(\mu(A_0)-\ep_0)^\frac{1}{p}>0$.
Let $f_0:=M_0\mu(A_0)^\frac{1}{p}\chi_{A_0}$
and $f_k:=M_0\mu(A_0)^\frac{1}{p}\chi_{A_0\setminus B_{n_k}}$ for every $\seqk$.
Then, the same calculation as above
shows that $f_0\not\in D(0,r_0)$, $f_k\in D(0,r_0)$ for every $\seqk$,
and $\pqnSu{f_k-f_0}\to 0$. We thus get a contradiction.
\begin{definition}
We say that the topology $\calT$ satisfies the
\emph{open sphere condition}\/ if for any $f\in\fSu(\mu)$ and $r>0$ the sphere $S(f,r)$ is
open with respect to $\calT$.
\end{definition}
\par
At first glance, the Sugeno-Lorentz topology $\calT$ looks like depending on constants $p$ and $q$,
but actually it is only one regardless of $p$ and $q$.
This is the reason why this topology is written $\calT$ instead of $\calT_{p,q}$.
To verify this fact, first recall the Dunford-Schwartz topology on $\calF_0(X)$ introduced in~\cite{K2021a}.
\par
Let $\mu\in\calM(X)$.
Following~\cite[Definition~III.2.1]{D-S}, define the distance $\rho$ on $\calF_0(X)$ by
\[
\rho(f,g):=\inf_{c>0}\varphi\left(c+\mu(\{|f-g|>c\}\right)
\]
for every $f,g\in\calF_0(X)$, where the function $\varphi\colon [0,\infty]\to [0,\pi/2]$
is given by
\[
\varphi(t):=\begin{cases}
\arctan t & \mbox{if }t\ne\infty,\\
\pi/2 & \mbox{if }t=\infty.
\end{cases}
\]
Let
\[
\dsn{f}:=\rho(f,0)
\]
for every $f\in\calF_0(X)$,
which is called the \emph{Dunford-Schwartz prenorm}\/ on $\calF_0(X)$.
For each $f\in\calF_0(X)$ and $r>0$, let
\[
S_0(f,r):=\{g\in\calF_0(X)\colon\dsn{f-g}<r\}.
\]
For each $f\in\calF_0(X)$, let $\calU_0(f)$ be the collection of all subsets $U$ of $\calF_0(X)$
such that $S_0(f,r)\subset U$ for some $r>0$.
Let $\calT_0$ be the collection of all subsets $G$ of $\calF_0(X)$ such that $G\in\calU_0(f)$
for every $f\in G$.
Then, by~\cite[Theorem~4.1]{K2021a} the collection $\calT_0$
is a topology on $\calF_0(X)$ satisfying properties (1)--(3) of Theorem~\ref{Top_property}.
The topology $\calT_0$ is referred to as the \emph{Dunford-Schwartz topology}\/ on $\calF_0(X)$
generated by $\mu$.
\par
The Sugeno-Lorentz prenorm is closely related to the Dunford-Schwartz prenorm.
This relation is crucial to the proof of the fact that the Sugeno-Lorentz topology is independent
of $p$ and $q$. 
\begin{proposition}\label{IN}
Let $\mu\in\calM(X)$. Let $0<p<\infty$ and $0<q<\infty$.
\begin{enumerate}
\item[\us{(1)}] For any $f\in\calF_0(X)$ it follows that
\[
\dsn{f}\geq\min\left\{\varphi\left(\pqnSu{f}^p\right),
\varphi\left(\left(\frac{q}{p}\right)^\frac{1}{q}\pqnSu{f}\right)\right\}.
\]
\item[\us{(2)}] For any $f\in\calF_0(X)$ it follows that
\[
\dsn{f}\leq\varphi\left(\pqnSu{f}^p+\left(\frac{q}{p}\right)^\frac{1}{q}\pqnSu{f}\right).
\]
\end{enumerate}
\end{proposition}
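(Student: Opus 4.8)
The plan is to reduce both inequalities to an elementary estimate on the nonincreasing distribution function $G(s):=\mu(\{|f|>s\})$. Writing $L_0:=\ls$ and $M_0:=\rls=1/L_0$ (as in the proof of Theorem~\ref{OSC}), I would first use Proposition~\ref{P-SuL}(1) to rewrite $\pqnSu{f}=\Su(\mu^{1/p},L_0|f|)$, then expand the Sugeno integral and substitute $s=t/L_0$ in the supremum to obtain
\[
N:=\pqnSu{f}=\sup_{s\ge 0}\min\bigl\{L_0 s,\,G(s)^{1/p}\bigr\}.
\]
On the other side, by definition $\dsn{f}=\inf_{c>0}\varphi\bigl(c+G(c)\bigr)$, and $\varphi$ is continuous and strictly increasing on $[0,\infty]$ with $\varphi(\infty)=\pi/2$; these are the only properties of $\varphi$ I will use, together with $\varphi(\min\{a,b\})=\min\{\varphi(a),\varphi(b)\}$.

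Two monotonicity facts about $G$ relative to $N$ drive the whole argument. First, for every $s>M_0N$ one has $G(s)\le N^p$: indeed $L_0 s>N$, so the term $L_0 s$ cannot be the minimum in the supremum defining $N$ at the point $s$, forcing $G(s)^{1/p}\le N$. Second, for every $v$ with $0\le v<N$ there is $s_0\ge 0$ with $s_0>M_0 v$ and $G(s_0)>v^p$; this is just the statement that the supremum $N$ exceeds $v$, so some point $s_0$ makes both $L_0 s_0>v$ and $G(s_0)^{1/p}>v$.

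For part~(2), assuming $N<\infty$ (the case $N=\infty$ being trivial since $\varphi\le\pi/2$), I take any $c>M_0N$ and use the first fact to get $\dsn{f}\le\varphi(c+G(c))\le\varphi(c+N^p)$; letting $c\downarrow M_0N$ and invoking continuity of $\varphi$ yields $\dsn{f}\le\varphi(M_0N+N^p)$. For part~(1), fix $v<N$ with the associated $s_0$ from the second fact and let $c>0$ be arbitrary: if $c\ge s_0$ then $c+G(c)\ge c\ge s_0>M_0 v$, while if $c<s_0$ then monotonicity of $G$ gives $c+G(c)\ge G(c)\ge G(s_0)>v^p$; in either case $c+G(c)>\min\{M_0 v,v^p\}$. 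Applying the increasing $\varphi$ and taking the infimum over $c$ gives $\dsn{f}\ge\min\{\varphi(M_0 v),\varphi(v^p)\}$, and letting $v\uparrow N$ (or $v\to\infty$ when $N=\infty$) produces $\dsn{f}\ge\min\{\varphi(N^p),\varphi(M_0N)\}$.

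The one genuinely delicate point is bookkeeping the interaction between the supremum defining $N$ and the infimum defining $\dsn{f}$, since $\min\{L_0 s,G(s)^{1/p}\}$ couples an increasing with a merely nonincreasing term and neither extremum need be attained; this is exactly what the case split on $s_0$ in part~(1) and the one-sided limit $c\downarrow M_0N$ in part~(2) are designed to absorb. Everything else---verifying the boundary cases $N\in\{0,\infty\}$ and moving $\varphi$ past $\min$ and past the one-sided limits---is routine once these two estimates are in hand.
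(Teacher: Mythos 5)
Your proposal is correct. For part (1) you are essentially reproducing the paper's argument: the paper splits the infimum defining $\dsn{f}$ at the threshold $c=M_0\pqnSu{f}$ (with $M_0=(q/p)^{1/q}$), using the integrability estimate $\Su(\mu,h)\le t\lor\mu(\{h>t\})$ to get $\mu(\{|f|>c\})\ge\pqnSu{f}^p$ below the threshold and the trivial bound $c+\mu(\{|f|>c\})\ge c$ above it; your version replaces the threshold by a witness point $s_0$ for a level $v<\pqnSu{f}$ and then lets $v\uparrow\pqnSu{f}$, which is the same split with one extra approximation layer. For part (2), however, you take a genuinely different route. The paper inverts $\varphi$, uses the intermediate value theorem to produce $c_0$ with $c_0^p+M_0c_0=\varphi^{-1}(\dsn{f})$, and then lower-bounds $\pqnSu{f}\ge c_0$ by evaluating the Sugeno supremum at $c_0$, after disposing of the boundary cases $\dsn{f}=0$ and $\dsn{f}=\pi/2$ separately. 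You instead prove the dual distribution-function estimate $\mu(\{|f|>s\})\le\pqnSu{f}^p$ for $s>M_0\pqnSu{f}$ directly from the supremum defining $\pqnSu{f}$ and let $s\downarrow M_0\pqnSu{f}$; this avoids the intermediate value theorem and the case analysis on $\dsn{f}$ entirely, at the cost of one continuity passage in $\varphi$. Both arguments are sound; yours has the merit of making the symmetry between the two parts visible, since (1) and (2) become the two one-sided consequences of where the nonincreasing function $G(s)=\mu(\{|f|>s\})$ sits relative to the level $\pqnSu{f}^p$ on either side of $s=M_0\pqnSu{f}$.
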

\begin{proof}
For simplicity of notation, let $L_0:=(p/q)^{1/q}$ and $M_0:=(q/p)^{1/q}$.
\par
(1)\ The conclusion is obvious if $\pqnSu{f}=0$.
We thus assume that $\pqnSu{f}>0$.
Then, for any $c\in (0,M_0\pqnSu{f})$, we have
\[
\pqnSu{f}=\Su(\mu^{1/p},L_0|f|)\leq\min\left\{L_0c,\mu(\{|f|>c\})^\frac{1}{p}\right\},
\]
hence
\[
\mu(\{|f|>c\})\geq\pqnSu{f}^p,
\]
and finally
\begin{equation}\label{IN1}
\inf_{0<c<M_0\pqnSu{f}}\varphi(c+\mu(\{|f|>c\}))\geq\varphi\left(\pqnSu{f}^p\right).
\end{equation}
Meanwhile, we have
\begin{equation}\label{IN2}
\inf_{c\geq M_0\pqnSu{f}}\varphi(c+\mu(\{|f|>c\}))\geq\varphi\left(M_0\pqnSu{f}\right).
\end{equation}
Since
\begin{align*}
\dsn{f}
&=\min\left\{\inf_{0<c<M_0\pqnSu{f}}\varphi(c+\mu(\{|f|>c\})),\right.\\[1mm]
&\quad\qquad\qquad\qquad\left.\inf_{c\geq M_0\pqnSu{f}}\varphi(c+\mu(\{|f|>c\}))\right\},
\end{align*}
the desired inequality follows from~\eqref{IN1} and~\eqref{IN2}.
\par
(2)\ The conclusion is obvious if $\dsn{f}=0$.
Thus the proof naturally falls into two cases.
\par
First consider the case where $\dsn{f}=\pi/2$.
Then $\mu(\{|f|>c\})=\infty$ for every $c>0$, so that
$\pqnSu{f}=\infty$. This leads to the conclusion.
\par 
Next consider the case where $0<\dsn{f}<\pi/2$.
Since the funciton $\xi(t):=t^p+M_0t$ is a continuous and increasing function on $(0,\infty)$
satisfying $\lim_{t\to+0}\xi(t)=0$ and $\lim_{t\to\infty}\xi(t)=\infty$,
by the intermediate value theorem there is $c_0\in (0,\infty)$ such that
$\xi(c_0)=c_0^p+M_0c_0=\varphi^{-1}(\dsn{f})$.
For this $c_0>0$, since
\[
\dsn{f}\leq\varphi(M_0c_0+\mu(\{|f|>M_0c_0\}),
\]
it follows that
\[
\mu(\{L_0|f|>c_0\})^\frac{1}{p}\geq\left(\varphi^{-1}(\dsn{f})-M_0c_0\right)^\frac{1}{p}=c_0.
\]
Hence
\begin{align*}
\pqnSu{f}
&=\Su(\mu^{1/p},L_0|f|)\geq c_0\land\mu(\{L_0|f|>c_0\})^\frac{1}{p}\\
&=c_0=\left(\varphi^{-1}(\dsn{f})-M_0c_0\right)^\frac{1}{p},
\end{align*}
which yields
\begin{equation}\label{IN3}
\varphi^{-1}(\dsn{f})\leq\pqnSu{f}^p+M_0c_0,
\end{equation}
and finally
\begin{equation}\label{IN4}
c_0=\left(\varphi^{-1}(\dsn{f})-M_0c_0\right)^\frac{1}{p}\leq\pqnSu{f}.
\end{equation}
Hence, the desired inequality follows from~\eqref{IN3} and~\eqref{IN4}.
\end{proof}
\begin{theorem}\label{TopE}
Let $\mu\in\calM(X)$. Let $0<p<\infty$ and $0<q<\infty$.
The Sugeno-Lorentz topology $\calT$ on $\fSu(\mu)$ is the relative topology induced from
the Dunford-Schwartz topology $\calT_0$ on $\calF_0(X)$.
Hence, the Sugeno-Lorentz topology $\calT$ does not depend on $p$ and $q$.
\end{theorem}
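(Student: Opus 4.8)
The plan is to identify $\calT$ with the trace of $\calT_0$ on $\fSu(\mu)$ by comparing the two distance topologies through their spheres, using Proposition~\ref{IN} as the bridge. Both distances are translation invariant: $\rho(f,g)=\dsn{f-g}$ and the Sugeno--Lorentz distance equals $\pqnSu{f-g}$, each depending only on $f-g$. Hence applying Proposition~\ref{IN} to $f-g$ gives, for all $f,g$, a two-sided comparison between $\dsn{f-g}$ and $\pqnSu{f-g}$ that is \emph{uniform in the centre} $f$. Writing $S(f,r)=\{g\in\fSu(\mu)\colon\pqnSu{f-g}<r\}$ and $S_0(f,s)=\{g\in\calF_0(X)\colon\dsn{f-g}<s\}$, I would turn these inequalities into mutual sphere inclusions: by Proposition~\ref{IN}(2), for each $s>0$ there is $r>0$ (independent of $f$) with $S(f,r)\subseteq S_0(f,s)\cap\fSu(\mu)$, choosing $r$ so that $\varphi(r^p+\rls r)<s$ and invoking monotonicity of $\varphi$ and of $t\mapsto t^p+\rls t$; by Proposition~\ref{IN}(1), for each $r>0$ there is $s>0$ (independent of $f$) with $S_0(f,s)\cap\fSu(\mu)\subseteq S(f,r)$, taking $s=\min\{\varphi(r^p),\varphi(\rls r)\}$. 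Conceptually this is the statement that the two distances are uniformly equivalent, which forces the generated topologies to coincide.

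With these inclusions I would prove the two containments. For $\calT_0|_{\fSu(\mu)}\subseteq\calT$: given $G_0\in\calT_0$ and $f\in G_0\cap\fSu(\mu)$, pick $s>0$ with $S_0(f,s)\subseteq G_0$ and then $r>0$ with $S(f,r)\subseteq S_0(f,s)\cap\fSu(\mu)\subseteq G_0\cap\fSu(\mu)$; since this is exactly the local sphere condition defining $\calT$, the trace $G_0\cap\fSu(\mu)$ is $\calT$-open. For the reverse containment $\calT\subseteq\calT_0|_{\fSu(\mu)}$: given $G\in\calT$, for each $f\in G$ choose $r_f>0$ with $S(f,r_f)\subseteq G$ and then $s_f>0$ with $S_0(f,s_f)\cap\fSu(\mu)\subseteq S(f,r_f)\subseteq G$, and set $U:=\bigcup_{f\in G}S_0(f,s_f)\subseteq\calF_0(X)$. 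A direct check gives $U\cap\fSu(\mu)=G$, since every trace $S_0(f,s_f)\cap\fSu(\mu)$ lies in $G$ while each $f\in G$ lies in its own sphere $S_0(f,s_f)$. Taking $G_0$ to be the $\calT_0$-interior of $U$ would then exhibit $G$ as the trace of a $\calT_0$-open set, \emph{provided} each $f\in G$ is $\calT_0$-interior to $U$.

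I expect this last proviso to be the main obstacle. Spheres need not be $\calT_0$-open---this is the analogue for $\calT_0$ of the phenomenon exhibited in Example~\ref{ex:open}, so a centre need not lie in the interior of its own sphere---and hence one cannot simply invoke openness of $S_0(f,s_f)$. The feature to exploit is that $U$ is enlarged by the parts of the spheres lying \emph{outside} $\fSu(\mu)$, and these extra points are precisely what must absorb the failure of openness: I would show that $U$ is a genuine $\calT_0$-neighbourhood of each $f\in G$ by producing, from the uniform inclusion above together with the translation invariance of $\rho$, an honest $\calT_0$-open set squeezed between $f$ and $U$ whose trace on $\fSu(\mu)$ remains inside $G$. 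Once $\calT=\calT_0|_{\fSu(\mu)}$ is secured, the final assertion is immediate: the Dunford--Schwartz topology $\calT_0$ is manufactured from $\mu$ alone, and $\fSu(\mu)$ itself does not depend on $p$ or $q$, so the trace topology $\calT$ cannot depend on $p$ or $q$ either.
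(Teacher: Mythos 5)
Your first containment (the trace of $\calT_0$ is contained in $\calT$) is correct and is exactly the paper's argument: Proposition~\ref{IN}(2) gives $S(f,r_0)\subset S_0(f,r)$ with $r_0$ chosen via the intermediate value theorem so that $\varphi(r_0^p+\rls r_0)=r$, and this immediately yields the local sphere condition defining $\calT$. The final deduction that $\calT$ is independent of $p$ and $q$ is also fine.

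The gap is in the other containment, and it is exactly the step you flag as ``the main obstacle'' and then leave unresolved. The obstacle is self-inflicted: you only record the \emph{trace} inclusion $S_0(f,s)\cap\fSu(\mu)\subseteq S(f,r)$, which forces you to enlarge $G$ to $U=\bigcup_f S_0(f,s_f)$ and then to argue that $U$ is a $\calT_0$-neighbourhood of each $f\in G$ --- precisely the kind of statement that fails for distance topologies without extra hypotheses on $\mu$ (that is the content of Theorem~\ref{OSC}), and you give no argument for it. What Proposition~\ref{IN}(1) actually delivers is the inclusion of \emph{full} spheres: if $\dsn{f-g}<s:=\min\{\varphi(r^p),\varphi(\rls r)\}$, then $\pqnSu{f-g}<r$, so $g$ already lies in $S(f,r)\subseteq\fSu(\mu)$; no points of $S_0(f,s)$ stick outside $\fSu(\mu)$. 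Hence for $G\in\calT$ and $f\in G$ one gets $S_0(f,s_f)\subseteq S(f,r_f)\subseteq G$, and $G$ itself satisfies the defining condition $G\in\calU_0(f)$ for every $f\in G$, i.e.\ $G\in\calT_0$ outright. Note that membership in the distance topology $\calT_0$ requires only that each point of $G$ admit \emph{some} sphere inside $G$; it never requires those spheres to be open, so the non-openness of Dunford--Schwartz spheres is irrelevant here. This is how the paper closes the argument; your construction of $U$ and its interior is an unnecessary detour that, as written, does not terminate.
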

\begin{proof}
For simplicity of notation, let $M_0:=(q/p)^{1/q}$.
We first show that $\calT\subset\calT_0\cap\fSu(\mu)$.
Let $H\in\calT$ and $f\in H$. Then $S(f,r)\subset H$ for some $r>0$.
Let
\[
r_0:=\min\left\{\varphi\left(r^p\right),\varphi\left(M_0r\right)\right\}.
\]
Then we must have $S_0(f,r_0)\subset S(f,r)$, for otherwise there is $g_0\in S_0(f,r_0)$ but
$g_0\not\in S(f,r)$.
Since $\pqnSu{f-g_0}\geq r$, by (1) of Proposition~\ref{IN} we have
\[
\dsn{f-g_0}\geq\left\{\varphi\left(r^p\right),\varphi\left(M_0r\right)\right\}=r_0,
\]
a contradiction. Consequently, $H\in\calT_0$.
Since $H$ is a subset of $\fSu(\mu)$, we conclude that $H\in\calT_0\cap\fSu(\mu)$.
\par
Next we show that $\calT_0\cap\fSu(\mu)\subset\calT$. To this end,
let $H\in\calT_0\cap\fSu(\mu)$.
Then $H=G\cap\fSu(\mu)$ for some $G\in\calT_0$.
Let $f\in H$. Since $f\in G$, it follows that $S_0(f,r)\subset G$ for some $r>0$.
Since the function $\xi(t):=t^p+M_0t$ is a continuous and increasing function on $(0,\infty)$
satisfying $\lim_{t\to+0}\xi(t)=0$ and $\lim_{t\to\infty}\xi(t)=\infty$,
by the intermediate value theorem there is $r_0\in (0,\infty)$
such that $\xi(r_0)=\varphi^{-1}(r)$, hence
\begin{equation}\label{TopE1}
\varphi\left(r_0^p+M_0r_0\right)=r.
\end{equation}
Then we must have $S(f,r_0)\subset S_0(f,r)$, for otherwise there is $g_0\in S(f,r_0)$ but
$g_0\not\in S_0(f,r)$. Since $\pqnSu{f-g_0}<r_0$, by (2) of Proposition~\ref{IN} we have
\[
\dsn{f-g_0}<\varphi\left(r_0^p+M_0r_0\right)=r,
\]
a contradiction.
It thus follows that $S(f,r_0)\subset H$, hence $H\in\calT$.
\end{proof}
From Theorem~\ref{TopE} we see that the topologies on $\fSu(\mu)$
generated by the Sugeno-Lorentz prenorm
$\pqnSu{\,\cdot\,}$ and by the Sugeno prenorm $(\,\cdot\,)_1$ are equal to each other
and that they coincide with the relative topology induced from the Dunford-Schwartz topology
on $\calF_0(X)$.
\par

In the rest of this section, assume that $\mu$ is null-additive in such a way that the quotient
space $\qSu(\mu)$ and the prenorm $\pqnSu{\,\cdot\,}$ on $\qSu(\mu)$ are well-defined.
Let $\tcalT$ be the distance topology generated by the distance
\[
\td([f],[g]):=\pqnSu{[f]-[g]}
\]
for every $[f],[g]\in\qSu(\mu)$.
In other words, $\tcalT$ is defined in the following way:
For each $[f]\in\qSu(\mu)$ and $r>0$, let
\[
\tS([f],r):=\{[g]\in\qSu(\mu)\colon\pqnSu{[f]-[g]}<r\}.
\]
For each $[f]\in\qSu(\mu)$, let $\tcalU([f])$ be the collection of all subsets $\tU$ of $\qSu(\mu)$
such that $\tS([f],r)\subset\tU$ for some $r>0$.
Let $\tcalT$ be the collection of all subsets $\tG$ of $\qSu(\mu)$
such that $\tG\in\tcalU([f])$ for every $[f]\in\tG$.
Then Theorems~\ref{Top_property} and~\ref{OSC} take the following form.
\begin{theorem}\label{qSu-Top}
The collection $\tcalT$ is a topology on $\qSu(\mu)$ and has the following properties.
\begin{enumerate}
\item[\us{(1)}] For any $[f]\in\qSu(\mu)$, the neighborhood system of $[f]$
is contained in $\tcalU([f])$.
\item[\us{(2)}] For any sequence $\{[f_n]\}_\seqn\subset\qSu(\mu)$ and $[f]\in\qSu(\mu)$, 
if $\pqnSu{[f_n]-[f]}\to 0$ then $[f_n]\to [f]$.
\item[\us{(3)}] The following conditions are equivalent.
\begin{enumerate}
\item[\us{(i)}] For any $[f]\in\qSu(\mu)$ and $r>0$, the sphere $\tS([f],r)$
is a neighborhood of $[f]$.
\item[\us{(ii)}] For any $[f]\in\qSu(\mu)$, the neighborhood system of $[f]$
coincides with $\tcalU([f])$.
\item[\us{(iii)}] The topology $\tcalT$ is first countable and for any sequence
$\{[f_n]\}_\seqn\subset\qSu(\mu)$ and $[f]\in\qSu(\mu)$, 
it follows that $[f_n]\to [f]$ if and only if $\pqnSu{[f_n]-[f]}\to 0$.
\end{enumerate}
\item[\us{(4)}] The following conditions are equivalent.
\begin{enumerate}
\item[\us{(i)}] $\mu$ is autocontinuous from above.
\item[\us{(ii)}] For every $[f]\in\qSu(\mu)$ and $r>0$ the sphere $\tS([f],r)$
is open with respect to $\tcalT$.
\end{enumerate}
Furthermore, if $\mu$ is autocontinuous from above, then $\tcalU([f])$ is the
neighborhood system of $[f]$ for every $[f]\in\qSu(\mu)$ and the topology $\tcalT$
is first countable.
\item[\us{(5)}] The following conditions are equivalent.
\begin{enumerate}
\item[\us{(i)}] $\mu$ is autocontinuous from below.
\item[\us{(ii)}] For every $[f]\in\qSu(\mu)$ and $r>0$
the set $\{[g]\in\qSu(\mu)\colon\pqnSu{[f]-[g]}\leq r\}$ is closed with respect to $\tcalT$.
\end{enumerate} 
\end{enumerate}
\end{theorem}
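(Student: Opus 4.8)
The plan is to obtain properties (1)--(3) directly from Theorem~\ref{Top_property} and properties (4)--(5) by transcribing the proof of Theorem~\ref{OSC} to the quotient setting. The prerequisite for the first step is to check that $\td$ is a distance function on $\qSu(\mu)$. Since $\mu$ is null-additive, the prenorm $\pqnSu{\,\cdot\,}$ on $\qSu(\mu)$ is well-defined by (9) of Proposition~\ref{P-SuL} and satisfies $\pqnSu{[f]}=\pqnSu{f}$ for any representative $f$. Consequently $\td([f],[f])=\pqnSu{[0]}=0$, and the symmetry $\td([f],[g])=\td([g],[f])$ follows from $\pqnSu{[f]-[g]}=\pqnSu{[f-g]}=\pqnSu{-(f-g)}=\pqnSu{[g]-[f]}$, using that a prenorm is invariant under sign change. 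With $\td$ a distance on $\qSu(\mu)$, Theorem~\ref{Top_property} applied to $T=\qSu(\mu)$ with distance $\td$ yields (1)--(3) verbatim.

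The key device for (4) and (5) is that null-additivity lets us pass freely between an equivalence class and any of its representatives. Indeed, for sequences $\{[f_n]\}_\seqn$ and $[f]$ in $\qSu(\mu)$, choosing representatives $f_n,f$ we have $\pqnSu{[f_n]-[f]}=\pqnSu{f_n-f}$, so by Proposition~\ref{CRS} the condition $\pqnSu{[f_n]-[f]}\to 0$ is equivalent to $f_n\mconv f$. This is the quotient analogue of Proposition~\ref{CRS}, and combined with the Fatou and reverse Fatou lemmas of Proposition~\ref{Fatou} (which are statements about $\mu$ itself and require no modification) it supplies exactly the ingredients used in the proof of Theorem~\ref{OSC}.

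With this dictionary in hand, (4) and (5) are proved by repeating the argument of Theorem~\ref{OSC}. For the implications (i)$\Rightarrow$(ii), one assumes (in case (4)) that a sphere $\tS([f_0],r_0)$ fails to be open, or (in case (5)) that the complement of the set $\{[g]\in\qSu(\mu)\colon\pqnSu{[f_0]-[g]}\leq r_0\}$ fails to be open; in either case one extracts a sequence $[g_n]$ with $\pqnSu{[g_n]-[g_0]}\to 0$, lifts to representatives so that $L_0|f_0-g_n|\mconv L_0|f_0-g_0|$ (with $L_0,M_0$ as in the proof of Theorem~\ref{OSC}), and invokes the reverse Fatou lemma in case (4) and the Fatou lemma in case (5) from Proposition~\ref{Fatou} to reach the same contradiction as before. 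For the converses (ii)$\Rightarrow$(i), one reuses the explicit counterexamples of Theorem~\ref{OSC}: the simple functions $M_0r_0\chi_{A_0}$, $M_0r_0\chi_{A_0\cup B_{n_k}}$ and their analogues descend to elements of $\qSu(\mu)$, and every prenorm computed there is unchanged on passing to the quotient because $\pqnSu{[h]}=\pqnSu{h}$. The main obstacle is therefore not a new idea but the bookkeeping of the first two paragraphs: one must confirm that $\td$ is genuinely real-valued and a distance on $\qSu(\mu)$ and that each lifting step respects the equivalence relation, after which (4)--(5) follow by the same contradictions as in Theorem~\ref{OSC}.
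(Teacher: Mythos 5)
Your proposal is correct and follows essentially the same route as the paper, which gives no separate proof here but simply asserts that Theorem~\ref{Top_property} (for parts (1)--(3), applied to the distance $\td$ on $\qSu(\mu)$) and Theorem~\ref{OSC} (for parts (4)--(5), transcribed verbatim via the identification $\pqnSu{[h]}=\pqnSu{h}$ under null-additivity) ``take this form'' on the quotient. Your added bookkeeping --- checking that $\td$ is a symmetric distance and that the lifting to representatives is legitimate by (9) of Proposition~\ref{P-SuL} --- is exactly the implicit content of that assertion.
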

\begin{definition}
The topological space $(\qSu(\mu),\tcalT)$ or its topology $\tcalT$ is called
\emph{compatible with $\pqnSu{\,\cdot\,}$-convergence} if one of equivalent
conditions (i)--(iii) of (3) in Theorem~\ref{qSu-Top} is satisfied.
In terms of Proposition~\ref{CRS}
it may be called \emph{compatible with convergence in $\mu$-measure}.
Furthermore, if for any $[f]\in\qSu(\mu)$ and $r>0$ the sphere $\tS([f],r)$ is open
with respect to $\tcalT$, then we say that the topology $\tcalT$
satisfies the \emph{open sphere condition}.
\end{definition}

\begin{theorem}\label{Qtop}
Let $\kappa\colon\fSu(\mu)\to\qSu(\mu)$ be the quotient map defined by $\kappa(f):=[f]$
for every $f\in\fSu(\mu)$.
Then $\tcalT$ is the collection of all subsets $\tG$ of $\qSu(\mu)$
such that $\kappa^{-1}(\tG)\in\calT$.
Therefore, $\tcalT$ is the quotient topology induced on $\qSu(\mu)$ by $\calT$ and $\kappa$.
Furthermore, $\kappa$ is continuous.
\end{theorem}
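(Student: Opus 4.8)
The plan is to reduce the whole statement to one preimage identity for spheres and then to unwind the definitions of the two distance topologies $\calT$ and $\tcalT$. The single fact I would establish first is that, for every $[f]\in\qSu(\mu)$, every representative $f$ of $[f]$, and every $r>0$,
\[
\kappa^{-1}(\tS([f],r))=S(f,r).
\]
Indeed, $g\in\kappa^{-1}(\tS([f],r))$ means $[g]\in\tS([f],r)$, i.e. $\pqnSu{[f]-[g]}<r$; since $[f]-[g]=[f-g]$ and the quotient prenorm is well-defined --- this is exactly where null-additivity enters, via (9) of Proposition~\ref{P-SuL} --- this is the same as $\pqnSu{f-g}<r$, i.e. $g\in S(f,r)$. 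This is the only nontrivial ingredient; everything else is formal.

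With this identity in hand, I would prove the set equality
\[
\tcalT=\{\tG\subseteq\qSu(\mu)\colon\kappa^{-1}(\tG)\in\calT\}
\]
by a direct double inclusion. For the forward inclusion, take $\tG\in\tcalT$ and $f\in\kappa^{-1}(\tG)$; then $[f]\in\tG$, so $\tG\in\tcalU([f])$ and $\tS([f],r)\subseteq\tG$ for some $r>0$, whence $S(f,r)=\kappa^{-1}(\tS([f],r))\subseteq\kappa^{-1}(\tG)$, giving $\kappa^{-1}(\tG)\in\calU(f)$; as $f\in\kappa^{-1}(\tG)$ was arbitrary, $\kappa^{-1}(\tG)\in\calT$. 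For the reverse inclusion, assume $\kappa^{-1}(\tG)\in\calT$ and take $[f]\in\tG$ with any representative $f$; then $f\in\kappa^{-1}(\tG)$ together with $\kappa^{-1}(\tG)\in\calT$ gives $\kappa^{-1}(\tG)\in\calU(f)$, so $S(f,r)\subseteq\kappa^{-1}(\tG)$ for some $r>0$. To see $\tS([f],r)\subseteq\tG$, let $[g]\in\tS([f],r)$; then $\pqnSu{f-g}=\pqnSu{[f]-[g]}<r$, so $g\in S(f,r)\subseteq\kappa^{-1}(\tG)$, whence $[g]=\kappa(g)\in\tG$. Thus $\tG\in\tcalU([f])$ for every $[f]\in\tG$, i.e. $\tG\in\tcalT$.

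The two remaining assertions are then immediate. The displayed set equality is verbatim the definition of the quotient topology on $\qSu(\mu)$ induced by $\calT$ and $\kappa$, so $\tcalT$ is that quotient topology; and continuity of $\kappa$ is precisely the forward inclusion, which says that $\kappa^{-1}(\tG)\in\calT$ whenever $\tG\in\tcalT$. I do not expect any genuine obstacle: once the preimage identity $\kappa^{-1}(\tS([f],r))=S(f,r)$ is recorded, the argument is a routine manipulation of the neighborhood-base definitions of $\calT$ and $\tcalT$, the only standing hypothesis being the null-additivity already assumed in this part of the section, which is what makes the quotient prenorm, the spheres $\tS([f],r)$, and hence the identity itself meaningful.
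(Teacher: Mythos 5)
Your proposal is correct and follows essentially the same route as the paper: both arguments hinge on the sphere identity $\kappa^{-1}(\tS([f],r))=S(f,r)$ (equivalently $\kappa(S(f,r))=\tS([f],r)$, which the paper uses for the reverse inclusion) and then run the two inclusions by unwinding the neighborhood-base definitions of $\calT$ and $\tcalT$. Your explicit remark that the identity rests on the null-additivity standing hypothesis, via (9) of Proposition~\ref{P-SuL}, and that continuity of $\kappa$ is just the forward inclusion, only makes explicit what the paper leaves implicit.
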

\begin{proof}
Let $\tG\in\tcalT$.
For any $f\in\kappa^{-1}(\tG)$, we have $[f]=\kappa(f)\in\tG$, so that
$\tS([f],r)\subset\tG$ for some $r>0$.
Hence it follows that $S(f,r)=\kappa^{-1}(\tS([f],r))\subset\kappa^{-1}(\tG)$,
and hence, $\kappa^{-1}(\tG)\in\calT$.
Conversely, let $\tG$ be a subset of $\qSu(\mu)$ such that $\kappa^{-1}(\tG)\in\calT$.
For any $[f]\in\tG$, we have $f\in\kappa^{-1}(\tG)$,
so that $S(f,r)\subset\kappa^{-1}(\tG)$ for some $r>0$.
Then it follows that $\tS([f],r)=\kappa(S(f,r))\subset\tG$, and hence, $\tG\in\tcalT$.
The fact that $\kappa$ is continuous is easy to show.
\end{proof}
From Theorems~\ref{TopE} and~\ref{Qtop}
it is also seen that the Sugeno-Lorentz topology $\tcalT$ on $\qSu(\mu)$
does not depend on $p$ and $q$ and that it is only one regardless of $p$ and $q$.
\par
It is desirable that the topologies on $\fSu(\mu)$ and $\qSu(\mu)$ contain enough open sets
to distinguish their points in some way.
This is accomplished when the topologies satisfy the Hausdorff separation axiom.
\begin{theorem}\label{T2}
Let $\mu\in\calM(X)$. Let $0<p<\infty$ and $0<q<\infty$.
\begin{enumerate}
\item[\us{(1)}] Assume that $\mu$ is autocontinuous from below or satisfies the (p.g.p.).
Then, for any $f,g\in\fSu(\mu)$, if $\pqnSu{f-g}>0$ then $S(f,r)\cap S(g,r)=\eset$
for some $r>0$.
\item[\us{(2)}] Assume that $\mu$ is autocontinuous from above.
If $\mu$ is autocontinuous from below or satisfies the (p.g.p.), then the topological space
$(\qSu(\mu),\tcalT)$ is Hausdorff.
\end{enumerate}
\end{theorem}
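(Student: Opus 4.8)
The plan is to establish part~(1) by contradiction through the uniqueness of limits in $\mu$-measure, and then to deduce part~(2) from part~(1) together with the open sphere condition of Theorem~\ref{qSu-Top}(4). The recurring difficulty is that $\pqnSu{\,\cdot\,}$ satisfies no triangle inequality, so the usual metric separation (take $r=\pqnSu{f-g}/2$) is unavailable; the hypotheses on $\mu$ serve precisely to recover uniqueness of limits in its stead.

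For part~(1), suppose to the contrary that $S(f,1/n)\cap S(g,1/n)\ne\eset$ for every $\seqn$ and pick $h_n$ in this intersection. Then $\pqnSu{f-h_n}\to 0$ and $\pqnSu{g-h_n}\to 0$, so $h_n\mconv f$ and $h_n\mconv g$ by Proposition~\ref{CRS}. Fixing $c>0$ and writing $A_n:=\{|f-h_n|>c/2\}$ and $B_n:=\{|h_n-g|>c/2\}$, one has $\mu(A_n)\to 0$, $\mu(B_n)\to 0$, and $\{|f-g|>c\}\subset A_n\cup B_n$. Here the two hypotheses are handled separately. If $\mu$ satisfies the (p.g.p.), then $\mu(A_n\cup B_n)\to 0$, so monotonicity forces $\mu(\{|f-g|>c\})=0$. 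If instead $\mu$ is autocontinuous from below, I would use that $\{|f-g|>c\}\setminus A_n\subset B_n$ gives $\mu(\{|f-g|>c\}\setminus A_n)\to 0$, while autocontinuity from below gives $\mu(\{|f-g|>c\}\setminus A_n)\to\mu(\{|f-g|>c\})$; comparing the two limits again yields $\mu(\{|f-g|>c\})=0$. Since $c>0$ is arbitrary, (3) of Proposition~\ref{P-SuL} gives $\pqnSu{f-g}=0$, contradicting the assumption.

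For part~(2), note first that autocontinuity from above makes $\mu$ null-additive, consistent with the standing assumption that $\qSu(\mu)$ and $\pqnSu{\,\cdot\,}$ on it are well-defined. Let $[f]\ne[g]$ in $\qSu(\mu)$; by the definition of the equivalence relation and (3) of Proposition~\ref{P-SuL} this means $\pqnSu{[f]-[g]}=\pqnSu{f-g}>0$. Applying part~(1) --- whose hypothesis is exactly the standing assumption on $\mu$ --- yields $r>0$ with $S(f,r)\cap S(g,r)=\eset$. I would transfer this to the quotient via the identities $\kappa^{-1}(\tS([f],r))=S(f,r)$ and $\kappa^{-1}(\tS([g],r))=S(g,r)$ from the proof of Theorem~\ref{Qtop}: if some $[h]$ lay in $\tS([f],r)\cap\tS([g],r)$, then any representative $h$ would lie in $S(f,r)\cap S(g,r)$, so emptiness of the latter forces $\tS([f],r)\cap\tS([g],r)=\eset$. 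Finally, because $\mu$ is autocontinuous from above, Theorem~\ref{qSu-Top}(4) shows $\tS([f],r)$ and $\tS([g],r)$ are open; being disjoint open sets containing $[f]$ and $[g]$, they witness that $(\qSu(\mu),\tcalT)$ is Hausdorff. The main obstacle is the branch analysis in part~(1): lacking any triangle inequality, one must reconstruct uniqueness of the $\mu$-measure limit directly, and the two sufficient conditions on $\mu$ call for genuinely different manipulations of the sublevel sets.
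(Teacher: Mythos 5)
Your proof is correct and follows essentially the same route as the paper's: the contradiction argument in part~(1) via uniqueness of the limit in $\mu$-measure, with the (p.g.p.)\ branch handled through $\{|f-g|>c\}\subset A_n\cup B_n$ and the autocontinuity-from-below branch through $\{|f-g|>c\}\setminus A_n\subset B_n$, and part~(2) deduced from part~(1) together with the openness of the quotient spheres supplied by Theorem~\ref{qSu-Top}(4). The only difference is that you make explicit the quotient-map identity $\kappa^{-1}(\tS([f],r))=S(f,r)$ to justify disjointness, which the paper leaves implicit.
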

\begin{proof}
(1)\ Let $f,g\in\fSu(\mu)$ and assume that $\pqnSu{f-g}>0$.
Suppose, contrary to our claim, that $S(f,r)\cap S(g,r)\ne\eset$ for every $r>0$.
Then there is $\{h_n\}_\seqn\subset\fSu(\mu)$ such that $\pqnSu{h_n-f}\to 0$
and $\pqnSu{h_n-g}\to 0$.
Hence $h_n\mconv f$ and $h_n\mconv g$ by Proposition~\ref{CRS}.
Let $c>0$. Let $A_n:=\{|h_n-f|>c/2\}$ and $B_n:=\{|h_n-g|>c/2\}$ for every $\seqn$.
Then $\mu(A_n)\to 0$ and $\mu(B_n)\to 0$.
In addition, for any $\seqn$,
\begin{equation}\label{T2-1}
\{|f-g|>c\}\setminus A_n\subset B_n
\end{equation}
and
\begin{equation}\label{T2-2}
\{|f-g|>c\}\subset A_n\cup B_n.
\end{equation}
\par
If $\mu$ is autocontinuous from below, then
\[
\mu(\{|f-g|>c\}\setminus A_n)\to\mu(\{|f-g|>c\}),
\]
so that
\begin{equation}\label{T2-3}
\mu(\{|f-g|>c\})=0
\end{equation}
follows from~\eqref{T2-1}.
If $\mu$ satisfies the (p.g.p.), then~\eqref{T2-3} follows from~\eqref{T2-2}.
Since $c>0$ is arbitrary, by~\eqref{T2-3} we have $\pqnSu{f-g}=0$, a contradiction.
\par
(2)\ Let $[f],[g]\in\qSu(\mu)$ and assume that $[f]\ne [g]$.
Then $\pqnSu{f-g}>0$, hence assertion~(1) of this theorem
implies that $S(f,r)\cap S(g,r)=\eset$ for some $r>0$.
Let $\tU:=\tS([f],r)$ and $\tV:=\tS([g],r)$. Since $\mu$ is autocontinuous from above,
it follows from Theorem~\ref{qSu-Top} that
$\tU$ and $\tV$ are open neighborhoods of $[f]$ and $[g]$, respectively.
The disjointness of $\tU$ and $\tV$ follows from the fact that $S(f,r)\cap S(g,r)=\eset$.
\end{proof}
Metrizability of a topology is also one of important issues in the study of
topological spaces.
Recall that a topological space is (pseudo)metrizable if the space can be given a (pseudo)metric
such that the distance topology generated by the (pseudo)metric coincides with the given topology
on the space.
In this case, the given topology is called \emph{(pseudo)metrizable by the (pseudo)metric}. 
%
%
%
\begin{theorem}\label{Metrizable}
Let $\mu\in\calM(X)$.
Assume that $\mu$ is subadditive.
Then the topology $\calT$ is pseudometrizable by the pseudometric $d$ on $\fSu(\mu)$
defined by $d(f,g):=(f-g)_1$ for every $f,g\in\fSu(\mu)$, while $\tcalT$ is metrizable
by the metric $\td$ on $\qSu(\mu)$ defined by $\td([f],[g]):=([f]-[g])_1$
for every $[f],[g]\in\qSu(\mu)$.
\end{theorem}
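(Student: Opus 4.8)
The plan is to reduce the whole statement to the case $p=q=1$ together with the parameter-independence of the Sugeno-Lorentz topology established in Theorem~\ref{TopE}. First I would verify that $d$ is genuinely a pseudometric on $\fSu(\mu)$. Symmetry $d(f,g)=d(g,f)$ and the vanishing $d(f,f)=0$ are immediate from the prenorm axioms satisfied by $(\,\cdot\,)_1$, while the triangle inequality is exactly what the second half of assertion~(10) of Proposition~\ref{P-SuL} delivers once $\mu$ is assumed subadditive: there the subadditivity of $\mu$ is shown to be equivalent to $(\,\cdot\,)_1$ satisfying the triangle inequality, so $d(f,g)=\nSu{(f-h)+(h-g)}\leq\nSu{f-h}+\nSu{h-g}=d(f,h)+d(h,g)$ for all $f,g,h\in\fSu(\mu)$.

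Next I would observe that, by definition, the distance topology generated by $d(f,g)=\nSu{f-g}$ is nothing but the Sugeno-Lorentz topology associated with the parameters $p=q=1$. By Theorem~\ref{TopE} this topology coincides with the relative topology induced from the Dunford-Schwartz topology $\calT_0$, and hence equals $\calT$ regardless of $p$ and $q$. Since $d$ is now known to be a pseudometric, its distance topology is the ordinary pseudometric topology, so no separate verification that the spheres are open is required. Combining these two facts gives precisely that $\calT$ is pseudometrizable by $d$.

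For the quotient space I would first record that subadditivity forces null-additivity, which is needed so that $\qSu(\mu)$ and the prenorm $\pqnSu{\,\cdot\,}$ on it are well-defined: if $\mu(B)=0$ then $\mu(B\setminus A)=0$ by monotonicity, and applying subadditivity to the disjoint decomposition $A\cup B=A\cup(B\setminus A)$ together with monotonicity yields $\mu(A\cup B)=\mu(A)$. The symmetry and triangle inequality of $\td$ then descend verbatim from $d$ to the quotient, and because the prenorm $(\,\cdot\,)_1$ separates the points of $\qSu(\mu)$ the distance $\td$ is in fact a metric. Invoking the parameter-independence of $\tcalT$ (Theorems~\ref{TopE} and~\ref{Qtop}) identifies $\tcalT$ with the distance topology generated by $\td$, which yields metrizability. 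I do not expect a genuine obstacle in any of this; the one conceptual point to keep in view is that the abstract distance-topology construction of Subsection~\ref{Pmetric} automatically collapses to the familiar (pseudo)metric topology the moment the underlying distance obeys the triangle inequality, so that the theorem is in essence the $p=q=1$ specialization of Theorem~\ref{TopE} fed by Proposition~\ref{P-SuL}(10).
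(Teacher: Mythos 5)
Your proposal is correct and follows essentially the same route as the paper: the triangle inequality for $d$ and $\td$ comes from the subadditivity characterization of the Sugeno prenorm (the paper cites Proposition~\ref{K-sub} directly, you cite its consequence recorded in Proposition~\ref{P-SuL}(10)), and the identification of the resulting distance topologies with $\calT$ and $\tcalT$ is obtained from Theorems~\ref{TopE} and~\ref{Qtop}. The extra details you supply --- that subadditivity implies null-additivity so that $\qSu(\mu)$ and $\td$ are well-defined, and that the separation of points of $\qSu(\mu)$ upgrades $\td$ to a metric --- are points the paper leaves implicit.
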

\begin{proof}
It follows from Proposition~\ref{K-sub} that  $d$ is a pseudometric on $\fSu(\mu)$,
while $\td$ is a metric on $\qSu(\mu)$.
Furthermore, by Theorems~\ref{TopE} and~\ref{Qtop} the topologies $\calT$ and $\tcalT$
are generated by $d$ and $\td$, respectively.
\end{proof}
\begin{remark}
If $\mu$ is $K$-relaxed subadditive for some $K\geq 1$,
then $d$ and $\td$ satisfy the $K$-relaxed triangle inequality
and the topologies $\calT$ and $\tcalT$ are generated by $d$ and $\td$, respectively.
\end{remark}
\section{Topological linear properties of the Sugeno-Lorentz spaces}\label{TLS}
In this section, basic topological linear properties of the Sugeno-Lorentz spaces are
deduced from the corresponding properties of the space $\calF_0(X)$ developed in~\cite{K2021a}.
\begin{theorem}\label{TVS}
Let $\mu\in\calM(X)$. Assume that $\mu$ is relaxed subadditive.
If $\mu$ is order continuous and autocontinuous from above, then
$(\fSu(\mu),\calT)$ and $(\qSu(\mu),\tcalT)$ are topological linear spaces satisfying 
the open sphere condition.
Furthermore, $\tcalT$ is completely regular.
\end{theorem}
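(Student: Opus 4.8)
The plan is to transfer all the required structure from the ambient Dunford--Schwartz space $(\calF_0(X),\calT_0)$, exploiting the two structural identifications already proved: Theorem~\ref{TopE}, which presents $\calT$ as the relative topology of $\calT_0$ on $\fSu(\mu)$, and Theorem~\ref{Qtop}, which presents $\tcalT$ as the quotient topology induced by $\calT$ and the linear quotient map $\kappa$. First I would record that $\fSu(\mu)$ is a real linear subspace of $\calF_0(X)$; this is exactly the consequence of (4) and (10) of Proposition~\ref{P-SuL} noted immediately after that proposition, valid because $\mu$ is relaxed subadditive. Under the standing hypotheses (relaxed subadditive, order continuous, autocontinuous from above), the space $(\calF_0(X),\calT_0)$ is itself a topological linear space by the corresponding results of~\cite{K2021a}, and this is the fact I would take as the starting point.

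Next I would argue for $\fSu(\mu)$ and then for $\qSu(\mu)$. By Theorem~\ref{TopE}, $\calT$ is the relative topology on the linear subspace $\fSu(\mu)$; since the relative topology of a topological linear space on any linear subspace keeps addition and scalar multiplication continuous, $(\fSu(\mu),\calT)$ is a topological linear space. Its open sphere condition is immediate from Theorem~\ref{OSC}(1), as $\mu$ is autocontinuous from above. For the quotient, recall that $\qSu(\mu)$ is the linear space $\fSu(\mu)/(\calN\cap\fSu(\mu))$ and that, by Theorem~\ref{Qtop}, $\tcalT$ is precisely the quotient topology for the linear map $\kappa$; since the quotient of a topological linear space by a linear subspace, with the quotient topology, is again a topological linear space, $(\qSu(\mu),\tcalT)$ is a topological linear space. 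Its open sphere condition follows from Theorem~\ref{qSu-Top}(4), again using autocontinuity from above.

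For complete regularity of $\tcalT$, I would observe that relaxed subadditivity forces the (p.g.p.), so that $\mu$ is simultaneously autocontinuous from above and satisfies the (p.g.p.); Theorem~\ref{T2}(2) then yields that $(\qSu(\mu),\tcalT)$ is Hausdorff. As a Hausdorff topological linear space (in particular a Hausdorff topological group) it is completely regular, which closes the argument.

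The step I expect to be the most delicate is the transfer of continuity of the vector operations, since the prenorm $\pqnSu{\,\cdot\,}$ is only truncated subhomogeneous (Proposition~\ref{P-SuL}(4)) and not homogeneous, so one cannot verify continuity of scalar multiplication by the rescaling estimates available in normed spaces. The point I would stress is that this difficulty is bypassed entirely: I do not estimate the prenorm directly but instead lean on the already-established topological linear structure of $(\calF_0(X),\calT_0)$ from~\cite{K2021a} together with the purely topological-algebraic facts that linear subspaces carry the relative topology and quotients by linear subspaces carry the quotient topology as topological linear spaces. The identifications in Theorems~\ref{TopE} and~\ref{Qtop} are what make these generalities applicable, so the only genuine content beyond citation is verifying that the subspace and quotient at hand are the linear ones these identifications describe.
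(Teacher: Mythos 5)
Your proposal is correct and follows essentially the same route as the paper's proof: both obtain the topological linear structure of $(\calF_0(X),\calT_0)$ from~\cite{K2021a}, transfer it to $\fSu(\mu)$ via the subspace identification of Theorem~\ref{TopE} and to $\qSu(\mu)$ via the quotient identification of Theorem~\ref{Qtop}, read off the open sphere conditions from Theorems~\ref{OSC} and~\ref{qSu-Top}, and deduce complete regularity from the Hausdorff property supplied by Theorem~\ref{T2} together with the (p.g.p.), which follows from relaxed subadditivity.
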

\begin{proof}
By assumption, $\mu$ is even null-additive and satisfies the (p.g.p.).
It thus follows from~\cite[Theorem~9.3]{K2021a} that $(\calF_0(X),\calT_0)$ is a
topological linear space.
Note that the relaxed subadditivity of $\mu$ implies that
$\fSu(\mu)$ is a linear subspace of $\calF_0(X)$.
Therefore, it follows form~\cite[page 17]{Schaefer} that $(\fSu(\mu),\calT)$ is a topological linear space
since $\calT$ is the relative topology induced from $\calT_0$ by Theorem~\ref{TopE}.
The open sphere condition of $\calT$ follows from Theorem~\ref{OSC}.
\par
Next, since $\tcalT$ is the quotient topology induced by $\calT$ and $\kappa$
by Theorem~\ref{Qtop}, it follows from~\cite[page 20]{Schaefer}
that $(\qSu(\mu),\tcalT)$ is a topological linear space.
The open sphere condition of $\tcalT$ follows from Theorem~\ref{qSu-Top}.
\par
Finally, the linear topology $\tcalT$ is Hausdorff by Theorem~\ref{T2}.
It thus follows from~\cite[page~16]{Schaefer} that $\tcalT$ is completely regular.
\end{proof}
\begin{remark}
Theorem~\ref{TVS} contains \cite[Corollary~2]{H-O2} since every finite nonadditive measure
that is weakly subadditive and continuous from above in the sense of~\cite{H-O2} is relaxed subadditive,
order continuous, and autocontinuous from above.
\end{remark} 
%
%
%
%
%
%
%
%
%
The following corollary is an immediate consequence of (4) of Proposition~\ref{P-SuL}
and Theorem~\ref{Metrizable}.
%
%
%
\begin{corollary}
Let $\mu\in\calM(X)$.
Assume that $\mu$ is order continuous and subadditive.
Then the topological linear space $(\fSu(\mu),\calT)$ is pseudometrizable
by the pseudometric $d$ on $\fSu(\mu)$
defined by $d(f,g):=(f-g)_1$ for every $f,g\in\fSu(\mu)$,
while $(\qSu(\mu),\tcalT)$ is metrizable by the metric $\td$ on $\qSu(\mu)$
defined by $\td([f],[g]):=([f]-[g])_1$ for every $[f],[g]\in\qSu(\mu)$.
Furthermore, $d$ and $\td$ are translation-invariant and truncated subhomogeneous.
\end{corollary}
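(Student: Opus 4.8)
The plan is to obtain the statement by assembling the two results named in its preamble, Proposition~\ref{P-SuL}(4) and Theorem~\ref{Metrizable}, once the underlying spaces have been recognised as topological linear spaces. First I would check that the present hypotheses subsume those of Theorem~\ref{TVS}. Subadditivity is in particular relaxed subadditive (take $K=1$), and it forces autocontinuity from above: if $A,B_n\in\calA$ and $\mu(B_n)\to 0$, then decomposing $A\cup B_n=A\cup(B_n\setminus A)$ into disjoint pieces gives $\mu(A)\leq\mu(A\cup B_n)\leq\mu(A)+\mu(B_n\setminus A)\leq\mu(A)+\mu(B_n)\to\mu(A)$, so $\mu(A\cup B_n)\to\mu(A)$. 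Combined with the assumed order continuity, Theorem~\ref{TVS} then guarantees that $(\fSu(\mu),\calT)$ and $(\qSu(\mu),\tcalT)$ are topological linear spaces; this is the only place where order continuity is used, and it is what makes $\fSu(\mu)$ a linear subspace so that the operations below stay inside the space.

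The metrizability claims require no new work. Since $\mu$ is subadditive, Theorem~\ref{Metrizable} asserts precisely that $\calT$ is pseudometrizable by $d(f,g):=\nSu{f-g}$ and that $\tcalT$ is metrizable by $\td([f],[g]):=\nSu{[f]-[g]}$, which are exactly the distances appearing in the corollary.

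It then remains to verify the two algebraic properties. Translation-invariance is immediate from the formulas, since each distance sees its arguments only through their difference: $d(f+h,g+h)=\nSu{(f+h)-(g+h)}=\nSu{f-g}=d(f,g)$, and the same cancellation gives $\td([f]+[h],[g]+[h])=\td([f],[g])$. For truncated subhomogeneity I would apply Proposition~\ref{P-SuL}(4) with $p=q=1$ to the function $f-g$, obtaining $\nSu{\alpha(f-g)}\leq\max\{1,|\alpha|\}\nSu{f-g}$; hence $d(\alpha f,\alpha g)=\nSu{\alpha(f-g)}\leq\max\{1,|\alpha|\}d(f,g)$, and the identical computation in the quotient---where the prenorm $\nSu{\,\cdot\,}$ enjoys the same property---yields $\td(\alpha[f],\alpha[g])\leq\max\{1,|\alpha|\}\td([f],[g])$.

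There is no substantive obstacle here: the corollary is genuinely a matter of matching hypotheses and unwinding definitions. The single step not to overlook is the implication subadditive $\Rightarrow$ autocontinuous from above, because that hypothesis of Theorem~\ref{TVS} is not among the corollary's explicit assumptions, and it is precisely what justifies speaking of the \emph{topological linear space} $(\fSu(\mu),\calT)$ in the first place.
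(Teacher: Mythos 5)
Your proposal is correct and follows essentially the same route as the paper, which simply cites Proposition~\ref{P-SuL}(4) and Theorem~\ref{Metrizable} (with Theorem~\ref{TVS} implicitly supplying the topological linear space structure). Your explicit check that subadditivity implies autocontinuity from above, so that the hypotheses of Theorem~\ref{TVS} are met, is exactly the detail the paper leaves unstated, and your verification is sound.
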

\section{Summary of results}\label{conclusion}
In this paper, given a nonadditive measure $\mu$ and constants $0<p<\infty$ and $0<q<\infty$,
the Sugeno-Lorentz space $\fSu(\mu)$
and its quotient space $\qSu(\mu)$ are defined by using the Sugeno integral,
together with the Sugeno-Lorentz topologies $\calT$ on $\fSu(\mu)$ and $\tcalT$ on $\qSu(\mu)$
generated by $\pqnSu{\,\cdot\,}$.
As a continuation of our research~\cite{K2021b}
on the completeness and separability of the spaces,
this paper focuses on their topological and topological linear properties.
Some of our results are as follows.
\begin{itemize}
\item Any sphere defined by the Sugeno-Lorentz prenorm $\pqnSu{\,\cdot\,}$
is open with respect to the Sugeno-Lorentz topology
if and only if $\mu$ is autocontinuous from above.
\item The Sugeno-Lorentz topology on $\fSu(\mu)$
is the relative topology induced from the Dunford-Schwartz topology on $\calF_0(X)$.
Consequently, the Sugeno-Lorentz topology does not depend on $p$ and $q$,
and hence, it is only one regardless of $p$ and $q$.
\item Assume that $\mu$ is autocontinuous from above.
Then, the Sugeno-Lorentz topology on $\qSu(\mu)$ is Hausdorff if $\mu$ is autocontinuous form
below or satisfies the (p.g.p.).
\item Assume that $\mu$ is subadditive. Then the Sugeno-Lorentz topology on $\fSu(\mu)$ is
pseudometrizable, while the Sugeno-Lorentz topology on $\qSu(\mu)$ is metrizable.
\end{itemize}
\par
As the basic topological linear properties of the Sugeno-Lorentz spaces, the following results
are shown.
\begin{itemize}
\item Assume that $\mu$ is relaxed subadditive.
If $\mu$ is order continuous and autocontinuous from above, then the Sugeno-Lorentz spaces
$\fSu(\mu)$ and $\qSu(\mu)$ are topological linear spaces satisfying the open sphere condition.
\item If $\mu$ is order continuous and subadditive, then $(\fSu(\mu),\calT)$
is a pseudometrizable topological linear space, while $(\qSu(\mu),\tcalT)$ is a metrizable
topological linear space.
\end{itemize}
\par
Finally it should be remarked that
some of the foregoing results are peculiar to the Sugeno integrable function spaces.
A proper nonadditive counterpart of ordinary Lorentz spaces is certainly the Choquet-Lorentz spaces
defined by the Choquet integral; see~\cite{K-Y}.
%
%

\end{document}